\newenvironment{myframedeq}[1][\linewidth]{\FrameSep=4pt\abovedisplayskip=0pt\belowdisplayskip=0pt
\framed\hsize=#1\leftskip=\dimexpr(\textwidth-#1)/2\relax}
{\endframed}
\newtheorem{remark}{Remark}[section]
\newcommand{\tcb}{\textcolor{blue}}
\newcommand{\tcr}{\textcolor{red}}
\newcommand{\R}{{\mathbb R}}
\newcommand{\N}{{\mathbb N}}
\DeclareMathOperator{\argmin}{argmin}
\newcommand{\interior}{{\rm int}\kern 0.06em}
\newcommand{\cC}{{\mathcal C}}
\newcommand{\cE}{{\mathcal E}}
\newcommand{\cH}{{\mathcal{H}}}
\newcommand{\cO}{{\mathcal O}}
\newcommand{\demi}{\frac{1}{2}}
\newcommand{\ie}{{\it i.e.}\,\,}
\def\<{\langle}
\def\>{\rangle}
\newcommand\prox{{\rm prox}}%
\newcommand{\norm}[1]{\left\|{#1}\right\|}
\newcommand{\pa}[1]{\left({#1}\right)}
\newcommand{\bpa}[1]{\big({#1}\big)}
\newcommand{\IGAHD}{\text{\rm{IGAHD}}}
\newcommand{\AVD}[1]{\text{\rm{AVD}$_{#1}$}}
\newcommand{\HBF}{\text{\rm{HBF}}}
\newcommand{\DINAVD}[1]{\text{\rm{DIN-AVD}$_{#1}$}}
\newcommand{\ISIHD}{\text{\rm{ISIHD}}}
\newcommand{\RAG}[1]{\text{\rm{RAG}$_{#1}$}}
\newcommand{\NAG}[1]{\text{\rm{NAG}$_{#1}$}}
\newcommand{\RAPG}[1]{\text{\rm{RAPG}$_{#1}$}}
\newcommand{\seq}[1]{\pa{#1}_{k \in \N}}
\DeclareMathOperator*{\wlim}{w-lim}
\newcommand{\qandq}{\quad \text{and} \quad}
\newcommand{\eqdef}{:=}
\renewcommand*{\backrefalt}[4]{%
\ifcase #1 %
(Not cited)%
\or
(Cited on p.~#2)%
\else
(Cited on pp.~#2)%
\fi
}
\newcommand{\TheTitle}{From the Ravine method to the Nesterov method and vice versa: a dynamical system perspective}
\title{{\TheTitle}}
\author{
  Hedy Attouch\thanks{IMAG, Universit\'e Montpellier, CNRS UMR 5149, Place Eug\`ene Bataillon, 34095 Montpellier Cedex 5, France. E-mail: \texttt{hedy.attouch@umontpellier.fr}.}
  \and
  Jalal Fadili\thanks{GREYC CNRS UMR 6072, Ecole Nationale Sup\'erieure d'Ing\'enieurs de Caen, 14050 Caen Cedex France. 
Email: \texttt{Jalal.Fadili@greyc.ensicaen.fr}.}
}
\begin{document}

\maketitle

\begin{abstract}
We revisit the Ravine method of Gelfand and Tsetlin from a dynamical system perspective, study its convergence properties, and highlight its similarities and differences with the Nesterov accelerated gradient method. The two methods are closely related. They can be deduced from each other by reversing the order of the extrapolation and gradient operations in their definitions.
They benefit from similar fast convergence of values and convergence of iterates for general convex objective functions. We will also establish the high resolution ODE of the Ravine and Nesterov methods, and reveal an additional geometric damping term driven by the Hessian for both methods. This will allow us to prove fast convergence towards zero of the gradients not only for the Ravine method but also for the Nesterov method for the first time. We also highlight connections to other algorithms stemming from more subtle discretization schemes, and finally describe a Ravine version of the proximal-gradient algorithms for general structured smooth + non-smooth convex optimization problems.
\end{abstract}

\begin{keywords} Ravine method; Nesterov accelerated gradient method; Hessian driven damping; high resolution ODE; convergence rates; Lyapunov analysis; proximal algorithms.
\end{keywords}

\begin{AMS}
37N40, 46N10, 49M30, 65B99, 65K05, 65K10, 90B50, 90C25
\end{AMS}

\section{Introduction}\label{sec:prel} 
In a real Hilbert space $\cH$, we revisit  the Ravine method of Gelfand and Tsetlin \cite{GT} from a dynamical system perspective, study its fast convergence properties, and compare it with the Nesterov accelerated gradient method \cite{Nest1,Nest2}, which we coin here \NAG{} for short. We first consider the case of smooth convex optimization 
\begin{equation}\label{basic-min-smooth}
\min \left\lbrace  f(x)  : \, x\in\cH  \right\rbrace,
\end{equation}
where  $f: \cH \rightarrow \R $  is a convex function of  class  ${\cC}^1$, whose gradient $\nabla f $   is Lipschitz continuous, and which satisfies $\argmin_{\cH}(f) \neq \emptyset$.
We will unveil the close links between the Ravine method and \NAG{} which are sometimes confused in the literature. Indeed, the two methods stem from different discretizations of similar continuous dynamics and can be deduced from each other by reversing the order of the extrapolation and gradient update operations in their definitions.
Thus, they benefit from similar fast convergence properties.
On the other hand, the high resolution ODE of the Ravine and Nesterov methods reveal an additional geometric Hessian-driven damping term. The Hessian damping, which is a special case of strong damping in PDE's, plays an important role in attenuating the oscillations. This paves the way to proving new results on fast convergence towards zero of the gradients for both methods. To achieve even better attenuation of the oscillations, we also highlight connections to other algorithms stemming from more subtle discretization schemes of the high resolution ODE. We finally examine the case of "smooth + nonsmooth" structured convex optimization problems, and introduce a first-order inertial proximal gradient algorithm which is based on the Ravine method.

\section{Damped inertial dynamical systems for fast optimization}
Damped inertial dynamics have a natural mechanical and physical interpretation. Asymptotically, they tend to stabilize the system at a minimizer of the global energy function. As such, they offer an intuitive way to develop fast optimization methods. 
Let us briefly describe the main damped inertial dynamics used in optimization, their mechanical interpretation, and how the Ravine method stands among them.
The Ravine method was a precursor of the accelerated gradient methods. It has long been ignored and, surprisingly enough, is at the forefront of current research. According to the notes in \cite{SuvritSra} :

"\textit{Ravine method worked well and sparked numerous heuristics for selecting its parameters and improving its behavior. However, its convergence was never proved. It inspired Polyak's heavy-ball method, which seems to have inspired Nesterov's accelerated gradient method}".

\subsection{Heavy Ball with Friction \eqref{ODE001}}
The heavy ball with friction method was introduced by Polyak in 1964 \cite{Polyak2,Polyak1}. It describes the movement of a material point of positive mass subjected to a driving force governed by the gradient of the function to be minimized and a viscous friction force. According to the fundamental equation of mechanics, and having normalized the mass equal to one, it is written as follows
\begin{equation}\tag{\HBF}\label{ODE001}
\ddot{x}(t) + \gamma \dot{x}(t) + \nabla f (x(t))=0,
\end{equation}
where $\gamma >0$ is a fixed viscous damping parameter. 
The \eqref{ODE001} method proves to be a useful tool for exploring the local minima of a smooth non-convex function \cite{AGR}.
For convex optimization, it is especially interesting in the strongly convex case, where an appropriate choice of the damping coefficient provides  linear convergence with an optimal rate.
Unfortunately, in the case of a general convex function, it only provides a sublinear rate of convergence of values of order $\cO \left(\frac{1}{t} \right)$. A decisive step to improve it, and to pass from the rate $\cO \left(\frac{1}{t} \right)$ to the faster rate $\cO \left(\frac{1}{t^2} \right)$,  has been accomplished by considering  algorithms associated with inertial dynamics with asymptotically vanishing damping coefficients.
This is the motivation behind the dynamic \eqref{eq:AVD} and associated algorithm \eqref{eq:NAG} described hereafter.

\subsection{Nesterov Accelerated Gradient method \eqref{eq:NAG}}
In recent years, an in-depth study was carried out linking the \NAG{} method to inertial dynamics with vanishing viscosity, see \cite{AC1,AC2,AC2R-EECT,ACPR,CEG1,CEG2,SBC}. Given $\alpha$ a  positive parameter, the following second-order ODE
\begin{equation}\tag{\AVD{\alpha}}\label{eq:AVD}
\ddot{x}(t) + \frac{\alpha}{t} \dot{x}(t) + \nabla f (x(t))=0,
\end{equation}
was introduced in \cite{SBC}. An appropriate temporal discretized version of this ODE with step size $s>0$ gives the scheme \eqref{eq:NAG} which reads
\begin{myframedeq}[0.6\linewidth]
\begin{equation}\tag{\NAG{\alpha}}\label{eq:NAG}
\quad\quad
\begin{cases}
y_k&= x_{k} + \bpa{1 - \frac{\alpha}{k}}( x_{k}  - x_{k-1}) \\
x_{k+1}&= y_k - s \nabla f(y_k).
\end{cases}
\end{equation}
\end{myframedeq}
The scheme \eqref{eq:NAG} performs a gradient step at $y_k$, which is an extrapolated point obtained from $x_k$ and the previous iterate $x_{k-1}$.

The method depends in an essential way on the tuning of the extrapolation parameter $\alpha_k$ which takes the form $\alpha_k = 1 - \frac{\alpha}{k}$ in the \eqref{eq:NAG} scheme. So $\alpha_k$ tends to one from below in a subtle controlled way. The  historical version of the accelerated gradient method of Nesterov corresponds to $\alpha=3$, with the asymptotic convergence rate
$f(x(t)) - \min_{\cH} f = \cO \left( \frac{1}{t^2} \right)$ for the continuous dynamic \eqref{eq:AVD}, and 
$f(x_k) - \min_{\cH} f = \cO \left( \frac{1}{k^2} \right)$  for the corresponding scheme \eqref{eq:NAG}. Taking $\alpha >3$ provides convergence  of the trajectories and the improved convergence rate,
with small $o$ instead of capital $\cO$ in the above convergence rates. These results are obtained by Lyapunov analysis \cite{ACPR,AP,CD}, as summarized below.
\begin{theorem}\label{thm:NAG}
Suppose that $f: \cH \to \R$ is a convex differentiable function such that $\nabla f$ is $L$-Lipschitz continuous, and $S = \argmin_{\cH}(f) \neq \emptyset$. Take $\alpha \geq 3$, and $s \in ]0,1/L]$. Let $\seq{x_k}$ be a sequence  generated by the \eqref{eq:NAG} algorithm.  Set $t_{k} =\frac{k-1}{\alpha-1}$, and define, for each integer $k\geq 1$
\[
E_k \eqdef t_k^2( f (x_k)-f(z) ) +\frac{1}{2s}\|x_{k-1}  - z + t_k (x_{k} - x_{k-1} )\|^2 .
\] 
Then, the sequence $\seq{E_k}$ is non-increasing, and as $ k\to +\infty$
\[
f(x_k)-\min_{\cH} f = \cO\left(\frac{1}{k^2}\right),  \quad  \|x_k - x_{k-1}\| =\cO\left(\frac{1}{k}\right).
\]
In addition, when $\alpha>3$, 
\begin{eqnarray*}
f(x_k)-\min_{\cH} f=o\left(\frac{1}{k^2}\right), \quad \|x_k - x_{k-1}\| = o\left(\frac{1}{k}\right) , \qandq \wlim x_k =x^\star \in S ,
\end{eqnarray*}
where $\wlim$ stands for the weak limit.
\end{theorem}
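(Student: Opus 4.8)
The plan is to carry out a discrete Lyapunov analysis built on $\seq{E_k}$, in the spirit of the FISTA / Chambolle--Dossal estimates. The starting point is a single two-point inequality. Since $\nabla f$ is $L$-Lipschitz and $s \leq 1/L$, the descent lemma applied at the extrapolated point $y_k$ reads $f(x_{k+1}) \leq f(y_k) + \dotp{\nabla f(y_k)}{x_{k+1}-y_k} + \frac{1}{2s}\norm{x_{k+1}-y_k}^2$; adding the convexity inequality $f(y_k) \leq f(u) + \dotp{\nabla f(y_k)}{y_k - u}$, substituting $\nabla f(y_k) = \frac{1}{s}(y_k - x_{k+1})$ (the gradient step), and completing the square yields, for every $u \in \cH$,
\[
f(x_{k+1}) - f(u) \leq \frac{1}{2s}\pa{\norm{y_k - u}^2 - \norm{x_{k+1} - u}^2}.
\]
This is the engine of the whole argument.

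Next I would instantiate the two-point inequality at $u = z$ and at $u = x_k$ and combine them with weights $1$ and $t_{k+1}-1$. The decisive algebraic fact is that the \eqref{eq:NAG} momentum coefficient satisfies $1 - \frac{\alpha}{k} = \frac{t_k-1}{t_{k+1}}$, so that $y_k - x_k = \frac{t_k-1}{t_{k+1}}(x_k - x_{k-1})$; after multiplying by $t_{k+1}$ this makes all the cross terms reassemble, upon a completion of squares, exactly into the quadratic part $\norm{x_k - z + t_{k+1}(x_{k+1}-x_k)}^2$ of $E_{k+1}$. Discarding a nonnegative square, the bookkeeping produces
\[
E_{k+1} - E_k \leq -\pa{t_k^2 - t_{k+1}^2 + t_{k+1}}\pa{f(x_k) - f(z)},
\]
and a direct computation gives the coefficient $t_k^2 - t_{k+1}^2 + t_{k+1} = \frac{k(\alpha-3)+1}{(\alpha-1)^2}$, which is $\geq 0$ for $\alpha \geq 3$. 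Taking $z \in S$ makes the right-hand side $\leq 0$, proving that $\seq{E_k}$ is non-increasing. Then $t_k^2\pa{f(x_k)-\min_{\cH}f} \leq E_1$ gives the $\cO(1/k^2)$ value rate, and, using the standard boundedness of $\seq{x_k}$ for $\alpha \geq 3$, boundedness of the quadratic part of $E_k$ yields $\norm{x_k - x_{k-1}} = \cO(1/k)$.

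For $\alpha > 3$ the coefficient above grows linearly in $k$, so summing the inequality yields $\sum_k k\pa{f(x_k) - \min_{\cH}f} < +\infty$, and a companion estimate (now keeping the discarded square) yields $\sum_k k\norm{x_{k+1}-x_k}^2 < +\infty$. Since $\seq{E_k}$ is monotone and bounded below it converges; combining this with the two summability estimates upgrades the previous bounds to $f(x_k)-\min_{\cH}f = o(1/k^2)$ and $\norm{x_k - x_{k-1}} = o(1/k)$, following the Attouch--Peypouquet argument (the summability of $\sum_k k\pa{f(x_k)-\min_{\cH}f}$ combined with convergence of the rescaled values forces the latter to tend to zero). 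Weak convergence of $\seq{x_k}$ would then follow from Opial's lemma: every weak cluster point lies in $S$ because $f$ is weakly lower semicontinuous and $f(x_k) \to \min_{\cH}f$, so it only remains to establish that $\lim_k \norm{x_k - z}$ exists for every $z \in S$.

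The step I expect to be the genuine obstacle is precisely this last quasi-Fej\'er property, which does not follow from the monotonicity of $\seq{E_k}$ alone (its quadratic part couples $x_k$ and $x_{k-1}$ through the diverging weight $t_k$). The remedy is to set up an auxiliary second-order recursion for $\phi_k \eqdef \frac{1}{2}\norm{x_k - z}^2$ of the form $\phi_{k+1} - \phi_k - \beta_k(\phi_k - \phi_{k-1}) \leq \varepsilon_k$, and to control $\sum_k \varepsilon_k$ using exactly the summability $\sum_k k\norm{x_{k+1}-x_k}^2 < +\infty$ obtained above, before invoking the discrete lemma guaranteeing convergence of $\seq{\phi_k}$. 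Everything else reduces to careful completion-of-squares bookkeeping hinging on the identity $1 - \frac{\alpha}{k} = \frac{t_k-1}{t_{k+1}}$, together with the standard boundedness of $\seq{x_k}$ for $\alpha \geq 3$ used above.
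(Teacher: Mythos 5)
Your proposal is correct and follows essentially the same route the paper relies on: Theorem~\ref{thm:NAG} is presented there as a summary of the Lyapunov analyses of \cite{ACPR,AP,CD}, and your argument --- the two-point descent inequality, the identity $1-\frac{\alpha}{k}=\frac{t_k-1}{t_{k+1}}$, the decrease of $E_k$ via the coefficient $t_{k+1}^2-t_k^2-t_{k+1}=-\frac{k(\alpha-3)+1}{(\alpha-1)^2}$, the summability estimates for $\alpha>3$ feeding the small-$o$ upgrade, and Opial's lemma combined with an Alvarez-type recursion for $\|x_k-z\|^2$ --- is precisely that standard proof. Indeed, the same machinery (same $t_k$, same anchor $z_k=x_{k-1}+t_k(x_k-x_{k-1})$ with $z_{k+1}-z_k=-s\,t_{k+1}\nabla f(y_k)$, same coefficient computation) reappears in the paper's own proof of Theorem~\ref{thm:fastgradNAG}, so your outline is fully consistent with the paper's approach.
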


There has been an active literature devoted to study these questions, from various perspectives, which have given an in-depth understanding of the \eqref{eq:NAG} method; see for example \cite{AAD1,AC1,AC2,ACPR,ACR-subcrit,ADR,CD,Kim-F,LFP,May,MJ,Siegel,SDJS,SBC,VSBV}.

\subsection{Ravine method}\label{sec:Rav} 
The Ravine  method was introduced by Gelfand and  Tsetlin \cite{GT} in 1961. It mimics the flow of water in the mountains which first flows rapidly downhill through small, steep ravines and then flows along the main river in the valley. It also models the transmission of nerve impulses. It has been recently brought to the fore by Polyak \cite{Pol}. According to the above mechanical interpretation, the Ravine Accelerated Gradient method (coined \RAG{} for short) generates sequences $\seq{y_k}$ which satisfy
\begin{myframedeq}[0.6\linewidth]
\begin{equation}\tag{\RAG{\alpha}}\label{eq:RAG}
\begin{cases}
w_k=  y_k - s \nabla f(y_k)  \\
y_{k+1} = w_k + \bpa{1 -\frac{\alpha}{k+1}} \left( w_k - w_{k-1}\right).
\end{cases}
\end{equation}
\end{myframedeq}

Historically, the Ravine method was introduced with a fixed extrapolation coefficient. Taking the extrapolation coefficient equal to $\bpa{1 -\frac{\alpha}{k+1}}$ makes the Ravine method in accordance with \eqref{eq:NAG} and is crucial to obtain an accelerated method. A geometric view of \eqref{eq:RAG} is given in Figure~\ref{figRAG}. 
\begin{figure}[htbp]
\centering
\fbox{\begin{minipage}{12cm}

\setlength{\unitlength}{9cm}
\begin{picture}(1,0.7)(-0.3,0.04)

\put(0.364,0.647){$\bullet$}
\tcr{
\put(0.37,0.648){\vector(-1,-4){0.022}}
}

\put(0.34,0.55){$\bullet$}

\put(0.278,0.605){$\bullet$}
\tcr{
\put(0.287,0.61){\vector(-1,-3){0.031}}
}

\put(0.25,0.505){$\bullet$}

\tcr{
 \put(0.341,0.555){\vector(-2,-1){0.19}}
 }
\put(0.025,0.205){\line(1,1){0.178}}
\put(0.145,0.26){\line(1,1){0.1}}
\put(-0.1,0.15){\line(1,1){0.13}}
\put(0.07,0.34){\line(1,1){0.05}}

\put(-0.12,0.22){\line(1,1){0.08}}
\put(-0.025,0.33){\line(1,1){0.07}}

\put(0.14,0.45){$\bullet$}

\tcb{
\put(0.366,0.669){$y_{k-1}$}
\put(0.28,0.633){$y_{k}$}
\put(0.365,0.56){$w_{k-1} =   y_{k-1} -s\nabla f (y_{k-1}) $}
\put(0.27,0.5){$w_{k} =  y_{k} -s \nabla f (y_{k}) $}
\put(0.164,0.445){$y_{k+1}=  w_k + \pa{1 - \frac{\alpha}{k+1}} \left( w_k - w_{k-1}\right)$}
\put(-0.1,0.3){$S = \argmin_{\cH}(f)$}
}

\qbezier(-0.1,0.15)(0.6,0.43)(-0.1,0.4)
\qbezier(0.3,0.15)(1.0,0.7)(-0.1,0.69)
\qbezier(0.2,0.15)(0.9,0.637)(-0.1,0.639)

\end{picture}
\end{minipage}}
\caption{A geometrical illustration of \eqref{eq:RAG}.}
\label{figRAG}
\end{figure}

\subsection{From Ravine to Nesterov and vice versa}\label{sec:RavNest}
The Ravine method has been ignored for a long time, and sometimes confused with \eqref{eq:NAG} in the literature. Indeed, the two methods can be deduced from each other by reversing the order of the extrapolation and gradient operations. Even more confusing, they come within the same equations. Specifically, the variable $y_k$ which enters the definition of \eqref{eq:NAG} follows the \eqref{eq:RAG} algorithm. Despite the elementary proof of this result, we state it as a theorem, because of its importance. 

\begin{theorem}\label{Nest_Ravine}
{~}\\\vspace*{-0.5cm}
\begin{enumerate}[label=(\roman*)]
\item Let $\seq{x_k}$ be a sequence generated by the algorithm \eqref{eq:NAG}. Let $\seq{y_k}$ be the associated sequence given by $y_k=  x_{k} + \left(1 -\frac{\alpha}{k}\right) (x_{k}  - x_{k-1})$.
Then, $\seq{y_k}$ follows the algorithm \eqref{eq:RAG}.

\item Conversely, if $\seq{y_k}$ is a sequence generated by \eqref{eq:RAG}, then the sequence $\seq{x_k}$ defined by 
$
x_{k+1} =  y_k - s \nabla f (y_k)
$
obeys the algorithm \eqref{eq:NAG}.
\end{enumerate}
\end{theorem}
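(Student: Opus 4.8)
The plan is to exploit the single structural fact that distinguishes the two schemes: they differ only in the order in which the gradient step and the extrapolation are performed. Concretely, in \eqref{eq:NAG} the output of the gradient step is $x_{k+1} = y_k - s\nabla f(y_k)$, which is precisely the auxiliary point $w_k$ appearing in \eqref{eq:RAG}. The entire argument therefore rests on the identification
\[
w_k = x_{k+1}, \qquad \text{equivalently} \qquad w_{k-1} = x_k,
\]
after which both directions collapse to a substitution together with a shift of the index by one. No analytic input (convexity, Lipschitz continuity of $\nabla f$) is needed: this is a purely algebraic equivalence between two recursions.

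For part (i), I would start from a sequence $\seq{x_k}$ produced by \eqref{eq:NAG} together with its associated extrapolated sequence $y_k = x_k + \bpa{1 - \frac{\alpha}{k}}(x_k - x_{k-1})$, and set $w_k \eqdef x_{k+1} = y_k - s\nabla f(y_k)$. The first line of \eqref{eq:RAG} is then exactly this defining relation, hence holds by construction. For the second line, I substitute $w_k = x_{k+1}$ and $w_{k-1} = x_k$ into $w_k + \bpa{1-\frac{\alpha}{k+1}}(w_k - w_{k-1})$ to obtain $x_{k+1} + \bpa{1-\frac{\alpha}{k+1}}(x_{k+1} - x_k)$, which is precisely the \eqref{eq:NAG} extrapolation formula read at index $k+1$, namely $y_{k+1}$. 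Thus $\seq{y_k}$ satisfies \eqref{eq:RAG}.

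For part (ii), I would reverse the reasoning: starting from $\seq{y_k}$ generated by \eqref{eq:RAG}, I define $x_{k+1} \eqdef y_k - s\nabla f(y_k) = w_k$, so that $x_k = w_{k-1}$. The second \eqref{eq:NAG} relation is then the defining equation for $x_{k+1}$, and the first \eqref{eq:NAG} relation $y_k = x_k + \bpa{1-\frac{\alpha}{k}}(x_k - x_{k-1})$ becomes, after substituting $x_k = w_{k-1}$ and $x_{k-1} = w_{k-2}$, exactly the extrapolation line of \eqref{eq:RAG} read at index $k-1$. Hence $\seq{x_k}$ obeys \eqref{eq:NAG}.

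The computations themselves are immediate. The only point requiring genuine care, and the one I would flag as the (modest) main obstacle, is the consistent handling of the index shift together with the initialization: one must check that the correspondence $w_k \leftrightarrow x_{k+1}$ is compatible with the starting indices of both recursions, so that the equivalence holds for every admissible $k$ rather than merely asymptotically.
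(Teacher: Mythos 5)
Your proposal is correct and follows essentially the same route as the paper: both arguments hinge on the identification $w_k = x_{k+1}$ (equivalently $w_{k-1}=x_k$) and then reduce each direction to an index-shifted substitution between the two recursions. Your explicit flag about checking the compatibility of starting indices is a reasonable point of care that the paper passes over silently, but it does not change the substance of the argument.
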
 
\begin{proof}
\begin{enumerate}[label=(\roman*)]
\item Suppose that the iterates $\seq{x_k}$ follow \eqref{eq:NAG}. According to the definition of $y_k$
\begin{eqnarray*}
y_{k+1}
&=&  x_{k+1} + \pa{1 -\frac{\alpha}{k+1}} ( x_{k+1}  - x_{k})\\
&=&  y_k- s\nabla f (y_k) + \pa{1 -\frac{\alpha}{k+1}} \Big( y_k- s\nabla f (y_k)  - ( y_{k-1}- s\nabla f (y_{k-1}) )\Big).
\end{eqnarray*}
Set $w_k = y_k - s\nabla f (y_k)$ (which is nothing but $x_{k+1}$). We obtain that the sequence $\seq{y_k}$ obeys \eqref{eq:RAG}.
%
\item Conversely, from the definition of $y_{k}$ and $w_k$ in \eqref{eq:RAG}, we have 
\begin{eqnarray*}
y_{k+1} &=& y_k- s\nabla f (y_k) + \pa{1 - \frac{\alpha}{k+1}} \Big(y_k- s\nabla f (y_k) -( y_{k-1}- s\nabla f (y_{k-1}))\Big).
\end{eqnarray*}
Setting $x_{k+1}  \eqdef  y_k- s \nabla f (y_k)$ as devised, we deduce that 
\begin{eqnarray*}
y_{k+1} &= & x_{k+1} + \pa{1 - \frac{\alpha}{k+1}} \left(x_{k+1} - x_{k}\right).
\end{eqnarray*}
Equivalently
\begin{eqnarray*}
y_{k} &= & x_{k} + \left(1 -\frac{\alpha}{k}\right) \left(x_{k} - x_{k-1}\right).
\end{eqnarray*}
Putting together the above relation with the definition of $x_{k+1}$, we obtain that $\seq{x_k}$
follows \eqref{eq:NAG}.
\end{enumerate}
This completes the proof. 
\end{proof}

\begin{remark}{
The order of the two operations, gradient and extrapolation is reversed in the two algorithms. In \eqref{eq:NAG} first the extrapolation operation is performed, followed by a gradient step. In the Ravine method \eqref{eq:RAG}, it is the reverse order. Although different, this is reminiscent of the approach followed in \cite{APR-backward-forward} which makes it possible to switch from forward-backward algorithms to backward-forward algorithms.}
\end{remark}

Equipped with Theorem~\ref{Nest_Ravine}, we now transfer convergence properties known for \eqref{eq:NAG} to \eqref{eq:RAG}. In particular, we will show that \eqref{eq:NAG} and \eqref{eq:RAG} share the same asymptotic convergence rates.  

\begin{theorem} \label{Rav-conv}
Let $f : \cH \rightarrow \mathbb R$ be a $\cC^1$ convex function whose gradient is $L$-Lipschitz continuous, and  $S= \argmin_{\cH}(f) \neq \emptyset$. Let $\seq{y_k} $ be the sequence generated by \eqref{eq:RAG} with $\alpha \geq 3$ and $sL \leq 1$. Then, the following properties hold: 
\begin{enumerate}[label=(\roman*)]
\item $f (y_k)-\min_{\cH} f =  \cO \left(\displaystyle{\frac{1}{k^2} }\right)$  as $k \to +\infty$.

If, in addition, $\alpha >3$, then
\item $f (y_k)-\min_{\cH} f =  o \left(\displaystyle{\frac{1}{k^2} }\right)$  as $k \to +\infty$.

Let $\seq{x_k}$ be the associated trajectory generated by \eqref{eq:NAG}, \ie $x_{k+1} =  y_k - s \nabla f (y_k)$. Then
\item $\wlim y_k = \wlim x_k \in S$.
\end{enumerate}
\end{theorem}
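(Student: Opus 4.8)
The plan is to lift the convergence theory of \eqref{eq:NAG} (Theorem~\ref{thm:NAG}) to \eqref{eq:RAG} through the exact correspondence established in Theorem~\ref{Nest_Ravine}. Starting from the \eqref{eq:RAG} sequence $\seq{y_k}$, I would invoke Theorem~\ref{Nest_Ravine}(ii) to set $x_{k+1} \eqdef y_k - s\nabla f(y_k)$, which produces a bona fide \eqref{eq:NAG} sequence $\seq{x_k}$ together with the companion identity $y_k = x_k + \bpa{1 - \frac{\alpha}{k}}(x_k - x_{k-1})$ obtained in that proof. Since $\alpha \geq 3$ and $sL \leq 1$ (so $s\in\,]0,1/L]$), Theorem~\ref{thm:NAG} applies verbatim to $\seq{x_k}$, giving $f(x_k) - \min_{\cH} f = \cO(1/k^2)$ and $\|x_k - x_{k-1}\| = \cO(1/k)$, and, when $\alpha > 3$, the corresponding little-$o$ estimates together with $\wlim x_k = x^\star \in S$.

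It then remains to transfer these estimates from $x_k$ to $y_k$. From the companion identity, $\|y_k - x_k\| = \bpa{1 - \frac{\alpha}{k}}\|x_k - x_{k-1}\| \leq \|x_k - x_{k-1}\| = \cO(1/k)$. Invoking $L$-smoothness of $f$,
\[
f(y_k) - f(x_k) \leq \dotp{\nabla f(x_k)}{y_k - x_k} + \frac{L}{2}\|y_k - x_k\|^2 \leq \|\nabla f(x_k)\|\,\|y_k - x_k\| + \frac{L}{2}\|y_k - x_k\|^2,
\]
and using the standard bound $\|\nabla f(x_k)\|^2 \leq 2L\bpa{f(x_k) - \min_{\cH} f}$ valid for convex functions with $L$-Lipschitz gradient, we get $\|\nabla f(x_k)\| = \cO(1/k)$. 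Combining these gives $f(y_k) - f(x_k) = \cO(1/k^2)$, and adding $f(x_k) - \min_{\cH} f = \cO(1/k^2)$ yields claim (i); running the same chain with the little-$o$ estimates available when $\alpha > 3$ yields (ii). For (iii), since $\|y_k - x_k\| \leq \|x_k - x_{k-1}\| \to 0$ strongly, the sequences $\seq{y_k}$ and $\seq{x_k}$ share every weak cluster point, so $\wlim y_k = \wlim x_k = x^\star \in S$.

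The only genuinely non-automatic point, and hence the main obstacle, is the upper bound on $f(y_k)$. The descent inequality applied to the gradient step $x_{k+1} = y_k - s\nabla f(y_k)$ only delivers $f(x_{k+1}) \leq f(y_k)$, i.e.\ a \emph{lower} bound on the value at the extrapolated point, so one cannot control $f(y_k)$ by the value of the iterate it generates. This forces the comparison of $y_k$ with $x_k$ through smoothness, which in turn requires the gradient-magnitude estimate $\|\nabla f(x_k)\| = \cO(1/k)$ inherited from the value rate rather than a crude Lipschitz bound. Beyond this, the argument is a routine transport of the \eqref{eq:NAG} estimates along the correspondence of Theorem~\ref{Nest_Ravine}.
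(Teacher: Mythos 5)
Your proposal is correct and follows the same route as the paper: invoke Theorem~\ref{Nest_Ravine} to produce the \eqref{eq:NAG} sequence $x_{k+1} = y_k - s\nabla f(y_k)$, import the rates of Theorem~\ref{thm:NAG}, note that $\|y_k - x_k\| \leq \|x_k - x_{k-1}\| = \cO(1/k)$ (resp.\ $o(1/k)$ when $\alpha>3$), bound the mismatch $f(y_k)-f(x_k)$, and pass weak limits through $y_k - x_k \to 0$ for part (iii). The single point where you deviate is the comparison inequality: the paper linearizes at $y_k$ using plain convexity, $f(y_k) \leq f(x_k) + \left\langle \nabla f(y_k), y_k - x_k\right\rangle$, and then eliminates the gradient through the algorithmic identity $\nabla f(y_k) = -\frac{1}{s}(x_{k+1}-y_k)$, which yields $f(y_k)-\min_{\cH} f \leq f(x_k)-\min_{\cH} f + \frac{1}{s}\bpa{\|x_{k+1}-x_k\| + \|x_k - x_{k-1}\|}\|x_k - x_{k-1}\|$, controlled purely by iterate increments; you instead expand at $x_k$ via the descent lemma and control $\|\nabla f(x_k)\|$ by the standard bound $\|\nabla f(x_k)\|^2 \leq 2L\bpa{f(x_k)-\min_{\cH} f}$, so the gradient decay is inherited from the value rate. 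Both closing steps are one-line standard facts and both survive the little-$o$ regime, so the proofs are of equal strength; the paper's variant avoids any appeal to smoothness in this step, while yours makes the $\cO(1/k)$ decay of $\|\nabla f(x_k)\|$ explicit, which is consonant with the gradient-rate results proved later in the paper. Your closing diagnosis of the one non-automatic point is exactly right: the descent inequality only gives $f(x_{k+1}) \leq f(y_k)$, a bound in the wrong direction, and the paper resolves it precisely as you do, by comparing $y_k$ to $x_k$ rather than to the iterate it generates.
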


\begin{proof}
\begin{enumerate}[label=(\roman*)]
\item According to Theorem~\ref{Nest_Ravine}, the sequence $\seq{x_k}$ defined by 
\begin{equation}\label{def:xk}
x_{k+1} =  y_k- s \nabla f (y_k)
\end{equation} 
follows \eqref{eq:NAG}. Let us take advantage of the convergence properties of \eqref{eq:NAG}, as described in Theorem~\ref{thm:NAG}. We thus have
\begin{equation}\label{NAGconvrate}
f(x_k)-\min_{\cH} f = \cO
\left(\frac{1}{k^2}\right),  \quad  \|x_k - x_{k-1}\| =\cO\left(\frac{1}{k}\right).
\end{equation} 
According to \eqref{def:xk}, we have $-\frac{1}{s} (x_{k+1} -  y_k ) =  \nabla f (y_k)$. Using successively the convex subdifferential inequality, the Cauchy-Schwarz inequality, and the triangle inequality, we obtain
\begin{eqnarray}
f(y_k) -\min_{\cH} f &\leq& f(x_k) -\min_{\cH} f + \frac{1}{s} \left\langle x_{k+1} -  y_k, x_k - y_k   \right\rangle \nonumber \\
&\leq& f(x_k)-\min_{\cH} f  + \frac{1}{s}  \|x_{k+1} -  y_k\|   \|y_{k} -  x_{k}\|, \nonumber \\
&\leq& f(x_k)-\min_{\cH} f  + \frac{1}{s} \left( \|x_{k+1} -  x_k\| + \|y_{k} -  x_{k}\| \right)  \|y_{k} -  x_{k}\|. \label{NAG2}
\end{eqnarray}
Using Theorem~\ref{Nest_Ravine} on the equivalence between \eqref{eq:RAG} and \eqref{eq:NAG}, we have 
\begin{eqnarray*}
y_{k} &= & x_{k} + \left(1 -\frac{\alpha}{k}\right) \left(x_{k} - x_{k-1}\right).
\end{eqnarray*}
Therefore
\begin{eqnarray}\label{NAG22}
\|y_{k}- x_{k} \| \leq \|x_{k} - x_{k-1}\|.
\end{eqnarray}
Combining \eqref{NAG2} with \eqref{NAG22} we obtain
\begin{equation}\label{NAG222}
f(y_k) -\min_{\cH} f \leq f(x_k)-\min_{\cH} f + \frac{1}{s} \left( \|x_{k+1} -  x_k\| + \|x_{k} - x_{k-1}\| \right)  \|x_{k} - x_{k-1}\|.
\end{equation}
According to \eqref{NAGconvrate} we conclude. 
\item Similar arguments when $\alpha >3$ yield the $o\pa{\frac{1}{k^2}}$ rate.

\item Since $\| y_k -x_k \| \leq \|x_{k} -  x_{k-1}\|$, and $  \|x_k - x_{k-1}\| =\cO
\left(\frac{1}{k}\right) \to 0$, we have $\| y_k -x_k \| \to 0$, \ie $y_k -x_k $ converges strongly to zero. Combining this with the fact that $\wlim x_k = x^\star \in S$, see Theorem~\ref{thm:NAG}, it follows that the sequence $\seq{y_k}$ converges weakly to the same limit $x^\star$.
\end{enumerate}
\end{proof}

\subsection{Inertial dynamics with Hessian driven damping}\label{sec:Hessian}
It the light of recent work in the study of the acceleration of first order algorithms through the lens of dynamical systems, we will show that the high resolution ODE's of \eqref{eq:RAG} and \eqref{eq:NAG} contain an additional Hessian-driven geometric damping term. The underlying dynamic, called \eqref{eq:DINAVD} is the subject of this section.

The Hessian driven damping plays an important role in various domains. As such, it can be introduced from various perspectives:
geometric (also called strong) damping of oscillating systems, which is a central theme in PDE's and mechanics, high resolution ODE of the Ravine method (this will be analyzed in section \ref{sec:dyn-perp}), and regularization of the Newton method.
In addition to the viscous damping already present in \eqref{eq:AVD}, taking into account the geometric damping which is driven by the Hessian of the function to be minimized makes it possible to improve the performance of these methods by attenuating their oscillations (see Figure~\ref{fig2D}). In the rest of this section, we follow the lines of \cite{ACFR}, see also \cite{SDJS} for a special case and motivation. 
When $f$ is twice continuously differentiable, the dynamic is  written as follows
\begin{myframedeq}[0.7\linewidth]
\begin{equation}\tag{\DINAVD{\alpha,\beta,b}}\label{eq:DINAVD}
\ddot{x}(t) + \frac{\alpha}{t} \dot{x}(t) + \beta \nabla^2 f (x(t))\dot{x}(t) +  b(t)\nabla f (x(t))=0,
\end{equation}
\end{myframedeq}
\noindent
where $b(t)$ takes into account the temporal scaling effect.
The prefix DIN, which stands shortly for Dynamical Inertial Newton, refers to the interpretation of this dynamic as a regularized continuous Newton method, see \cite{ASv,AL1,AL2}.
At first glance, the presence of the Hessian may seem to entail numerical difficulties. Fortunately, this is not the case as the Hessian intervenes  in the form $\nabla^2  f (x(t)) \dot{x} (t)$, which is nothing but the derivative with respect to time of the function $t \mapsto \nabla  f (x(t))$. 

\medskip 

The temporal discretization of the dynamic \eqref{eq:DINAVD} with $b(t)=1+\beta/t$, provides the first-order algorithm proposed in \cite{ACFR}
\begin{myframedeq}[\linewidth]
\begin{equation}\tag{\IGAHD}\label{eq:IGAHD}
\begin{cases}
y_k=   x_{k} + \left(1- \frac{\alpha}{k}\right) ( x_{k}  - x_{k-1}) -
\beta \sqrt{s}   \left( \nabla f (x_k)  - \nabla f (x_{k-1}) \right) -  \frac{\beta \sqrt{s}}{k}\nabla f( x_{k-1})\\
x_{k+1} = y_k - s \nabla f (y_k),
\end{cases}
\end{equation}
\end{myframedeq}
\noindent
where \eqref{eq:IGAHD} stands for Inertial Gradient Algorithm with Hessian driven Damping.
By comparison with \eqref{eq:NAG}, \eqref{eq:IGAHD} has a correction term which contains the difference of the gradients at two consecutive steps.
While preserving the convergence properties of \eqref{eq:NAG}, \eqref{eq:IGAHD} provides fast convergence to zero of the gradients, and  reduces the oscillatory aspects. This is made precise in the following theorem, see \cite{ACFR,ACFR-Optimisation}.
\begin{theorem}\label{thm:IGAHD}
 Let  $f: \cH \to \R$ be  a $\cC^1$ convex function whose gradient is $L$-Lipschitz continuous, and $S = \argmin_{\cH}(f)  \neq \emptyset$. 
Suppose that $\alpha \geq 3$, $0 < \beta <  2 \sqrt{s}$, $sL \leq 1$. Let $\seq{x_k}$ be a sequence generated by \eqref{eq:IGAHD}. Then, the following holds:
\begin{enumerate}[label=(\roman*)]
\item $f(x_k)-\min_{\cH} f = \cO 
\left(\dfrac{1}{k^2}\right)$,  and  $\|x_k - x_{k-1}\| =   \cO \left(\dfrac{1}{k}\right)$ as $k\to +\infty$;
\item $\sum_k  k^2 \| \nabla f (y_k) \|^2 < +\infty$ and $\sum_k  k^2 \| \nabla f (x_k) \|^2 < +\infty$.\\\\
In addition, when $\alpha>3$, 
\item $f(x_k)-\min_{\cH} f=o\left(\dfrac{1}{k^2}\right)$ and $\|x_k - x_{k-1}\| =   o\left(\dfrac{1}{k}\right)$;
\item $\wlim x_k \in S$. 
\end{enumerate}
\end{theorem}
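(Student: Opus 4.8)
The plan is to run a discrete Lyapunov analysis, in the spirit of the proof behind Theorem~\ref{thm:NAG} but enriched to absorb the Hessian-correction terms in \eqref{eq:IGAHD}. Fix $z \in S$, set $t_k = \frac{k-1}{\alpha-1}$ as before, and observe that the terms $-\beta\sqrt{s}\,(\nabla f(x_k)-\nabla f(x_{k-1})) - \frac{\beta\sqrt{s}}{k}\nabla f(x_{k-1})$ are precisely the discretizations of the geometric damping $\beta\nabla^2 f(x)\dot x$ and of the temporal scaling $b(t)=1+\beta/t$ appearing in \eqref{eq:DINAVD}. This dictates the shape of the energy: I would take the discrete counterpart of the continuous Lyapunov function for \eqref{eq:DINAVD}, namely
\[
E_k \eqdef t_k^2\,\bpa{f(x_k)-\min_{\cH} f} + \frac{1}{2s}\norm{v_k}^2,
\]
where $v_k = (x_{k-1}-z) + t_k(x_k-x_{k-1}) + \beta\sqrt{s}\,t_k\,\nabla f(x_k)$ is an anchored velocity whose gradient term is the discrete footprint of the Hessian damping (the exact coefficients of $v_k$, and possibly an extra non-negative term $c\,t_k^2\norm{\nabla f(x_k)}^2$, to be calibrated during the computation). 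This gradient contribution is exactly what will generate the additional summability in (ii).

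Next I would establish a one-step descent estimate of the form
\[
E_{k+1} \leq E_k - a_k\,\norm{\nabla f(y_k)}^2 - (\alpha-3)\,t_k\,\bpa{f(x_k)-\min_{\cH} f} + r_k,
\]
with $a_k \sim k^2$ positive and $r_k$ a remainder that the hypotheses render non-positive (or summable). The ingredients are classical: the descent lemma together with $sL\le 1$ gives $f(x_{k+1}) \le f(y_k) - \frac{s}{2}\norm{\nabla f(y_k)}^2$; the subgradient inequalities $f(z)\ge f(y_k)+\dotp{\nabla f(y_k)}{z-y_k}$ and $f(x_k)\ge f(y_k)+\dotp{\nabla f(y_k)}{x_k-y_k}$ control the value gaps; and $x_{k+1}-y_k=-s\nabla f(y_k)$ lets one expand $\norm{v_{k+1}}^2$. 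The delicate point is the bookkeeping of the cross-terms coupling $\nabla f(y_k)$ with the gradient-difference corrections: the constraint $\beta < 2\sqrt{s}$ is exactly the threshold keeping the resulting quadratic form in the gradients non-negative, so this algebraic verification is where I expect the main difficulty to lie.

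Granting the descent estimate, the conclusions follow by standard bookkeeping. Monotonicity of $E_k$ yields boundedness, hence $f(x_k)-\min_{\cH} f = \cO(1/k^2)$ and, from the squared-norm term, $\norm{x_k-x_{k-1}}=\cO(1/k)$, which is (i). Telescoping leaves $\sum_k a_k\norm{\nabla f(y_k)}^2<+\infty$ with $a_k\sim k^2$, i.e. the first half of (ii); the second half, $\sum_k k^2\norm{\nabla f(x_k)}^2<+\infty$, then follows from $\norm{\nabla f(x_k)-\nabla f(y_k)}\le L\norm{x_k-y_k}$ combined with $\norm{x_k-y_k}=\cO(1/k)$ inherited from (i). When $\alpha>3$, the surviving term makes $\sum_k t_k\bpa{f(x_k)-\min_{\cH} f}$ summable, and the usual monotone-energy argument (as in \cite{AP,ACPR}) upgrades the rates of (i) to the little-$o$ rates of (iii).

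Finally, for (iv) I would appeal to Opial's lemma. One must check that $\lim_k\norm{x_k-z}$ exists for every $z\in S$, which comes from the quasi-Fej\'er structure carried by the anchored-velocity term of $E_k$ once the summable residuals are under control, and that every weak sequential cluster point of $\seq{x_k}$ belongs to $S$; the latter is immediate from the $o(1/k^2)$ value convergence of (iii) and the weak lower semicontinuity of $f$ (equivalently, from $\nabla f(x_k)\to 0$, provided by (ii), and demiclosedness of $\nabla f$). Opial's lemma then delivers a single weak limit $\wlim x_k\in S$.
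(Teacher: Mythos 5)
You should first be aware that the paper itself does not prove Theorem~\ref{thm:IGAHD}: it is recalled from \cite{ACFR,ACFR-Optimisation}, so the only in-paper benchmark is the closely analogous Lyapunov proof of Theorem~\ref{thm:fastgradNAG}. Your overall plan (an energy $t_k^2(f(x_k)-\min_{\cH}f)+\frac{1}{2s}\norm{v_k}^2$ with a gradient-corrected anchored velocity, a one-step descent inequality, and $\beta<2\sqrt{s}$ as the positivity threshold of a quadratic form in the gradients) is indeed the approach of \cite{ACFR}, and parts (i), (iii), (iv) of your sketch follow the standard route.

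There is, however, a genuine gap in your derivation of the second half of (ii). You propose to get $\sum_k k^2\norm{\nabla f(x_k)}^2<+\infty$ from $\sum_k k^2\norm{\nabla f(y_k)}^2<+\infty$ via $\norm{\nabla f(x_k)-\nabla f(y_k)}\le L\norm{x_k-y_k}$ and $\norm{x_k-y_k}=\cO(1/k)$. This fails: it only yields
\[
k^2\norm{\nabla f(x_k)}^2 \;\le\; 2k^2\norm{\nabla f(y_k)}^2 + 2L^2\, k^2\norm{x_k-y_k}^2 ,
\]
and the second term is merely $\cO(1)$, hence not summable; nothing in (i) (nor in the $\alpha>3$ refinements, which give at best $\sum_k k\norm{x_k-x_{k-1}}^2<+\infty$) upgrades $k^2\norm{x_k-y_k}^2$ to a summable sequence. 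The summability in $x_k$ cannot be recovered a posteriori from the $y_k$ estimate by a Lipschitz perturbation; it must be produced inside the energy decay itself. This is exactly what the paper's proof of Theorem~\ref{thm:fastgradNAG} does: one must use the \emph{refined} descent lemma \eqref{eq:impdesclem}, keeping the extra term $-\frac{s}{2}\norm{\nabla f(x_k)-\nabla f(y_k)}^2$ (your sketch invokes only the plain descent inequality at $y_k$, which discards it), weight it by $t_{k+1}-1$, and then combine the two quadratic terms through the elementary inequality
\[
\norm{\nabla f(y_k)}^2 + (t_{k+1}-1)\norm{\nabla f(x_k)-\nabla f(y_k)}^2 \;\ge\; \frac{t_{k+1}-1}{t_{k+1}}\,\norm{\nabla f(x_k)}^2 ,
\]
which, multiplied by the factor $\sim t_{k+1}^2$ coming from the energy, produces the summable term $\sim k^2\norm{\nabla f(x_k)}^2$ directly in the descent inequality. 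Without this ingredient your scheme proves only the $y_k$ half of (ii), so you should repair the one-step estimate accordingly before the rest of the bookkeeping (little-$o$ rates, Opial argument) can be considered complete.
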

\begin{figure}
\begin{center}
\includegraphics[width=0.8\textwidth]{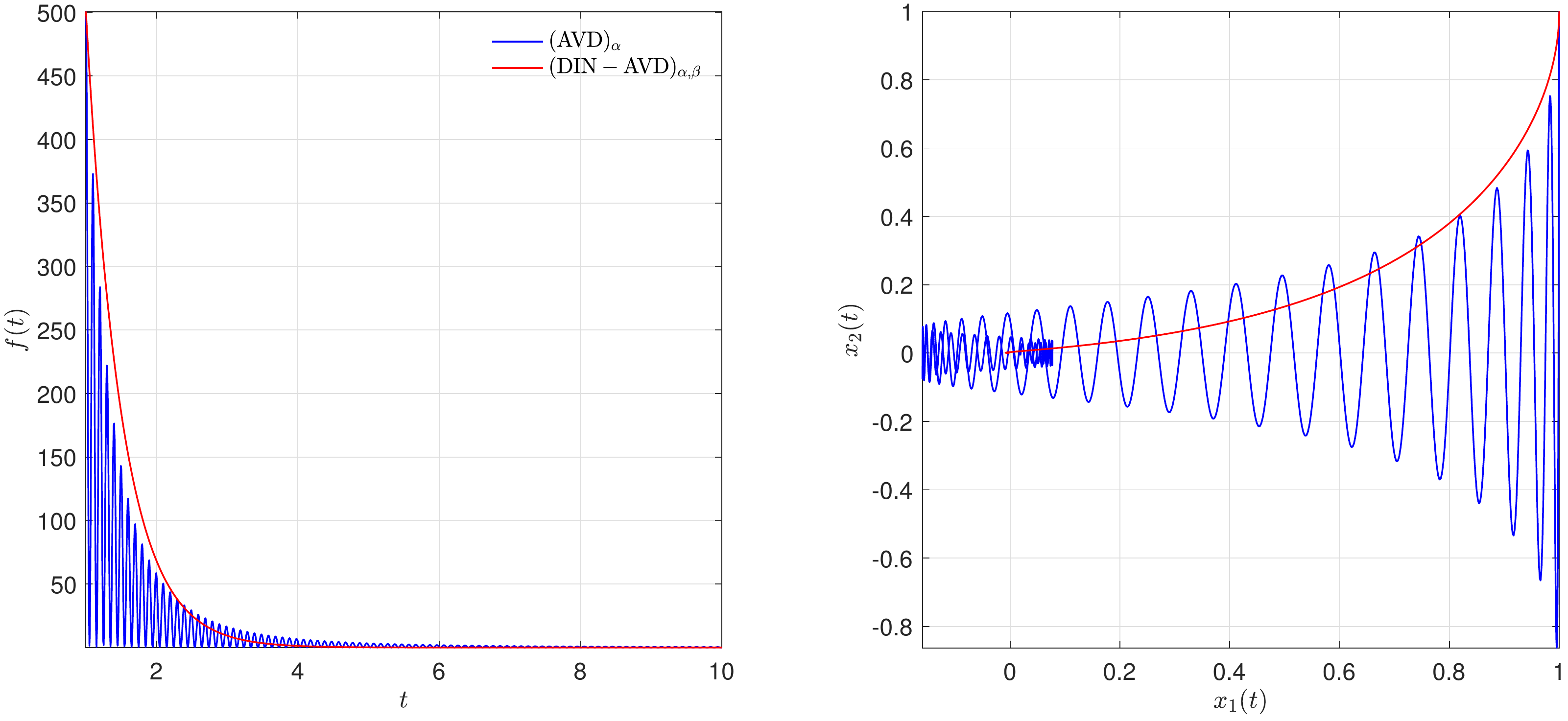}
\end{center}
\caption{Evolution of the objective (left) and trajectories (right) for \eqref{eq:AVD} ($\alpha=3.1)$ and \eqref{eq:DINAVD} ($\alpha=3.1,\beta=1$) on an ill-conditioned quadratic problem in $\R^2$.}
\label{fig2D}
\end{figure}

A number of other recent papers have contributed to this subject or closely related ones, see \cite{AAt1,AAt2,BCL,Kim,LJ}.

\medskip

Let us mention another important advantage of \eqref{eq:DINAVD}, which confirms its natural connection with first-order methods. When $\beta >0$, the presence of the Hessian driven damping in the dynamics \eqref{eq:DINAVD} allows to formulate \eqref{eq:DINAVD} as an equivalent first-order system both in time and in space, without explicit evaluation of the Hessian. This makes it possible to extend the existence of trajectories and the convergence results to the non-smooth case  $f \in \Gamma_0(\cH)$  (the class of proper, lower semicontinuous and convex functions on $\cH$), by simply replacing the gradient of $f$ by the subdifferential $\partial f$. This approach was initiated in \cite{AABR} and  \cite{APR,APR2}, and used in the perturbed case in \cite{AFV}.
From a mechanical perspective, non-smooth $f$ permits to model non-elastic shocks in unilateral mechanics, see \cite{AMR}.

\medskip

In \eqref{eq:DINAVD}, the Hessian appears explicitly. A closely related ODE is obtained by considering an approach where the Hessian driven damping appears in an implicit form. This was initiated in \cite{ALP}, see also \cite{MJ} for a related autonomous system  in the case of a strongly convex function $f$. This ODE, coined \eqref{eq:odetwo} for Inertial System with Implicit Hessian Damping, takes the form 
\begin{equation}\tag{\ISIHD}\label{eq:odetwo}
\ddot{x}(t)+\frac{\alpha}{t} \dot{x}(t)+\nabla f\bpa{x(t)+\beta(t)\dot{x}(t)}=0 ,
\end{equation}
where $\alpha \ge 3$ and $\beta(t)=\gamma+\frac{\beta}{t}$, $\gamma, \; \beta \geq 0$. The rationale justifying the use of the term ``implicit" comes from the observation that by a Taylor expansion (as $t \to +\infty$ we have $\dot{x}(t) \to 0$ which justifies using Taylor expansion), one has
\[
\nabla f\bpa{x(t)+\beta(t)\dot{x}(t)}\approx \nabla f (x(t)) + \beta(t)\nabla^2 f(x(t))\dot{x}(t) ,
\]
hence making the Hessian damping appear indirectly in \eqref{eq:odetwo}. As for \eqref{eq:DINAVD}, this ODE was found to have a smoothing effect on the oscillations.

\section{The dynamical system perspective of \eqref{eq:NAG}}

This sections reviews the close ties between the \eqref{eq:NAG} algorithm and the associated \eqref{eq:AVD} system. They will serve as a basis for exploring similar questions for \eqref{eq:RAG}. In doing so, we highlight general methods and tools that allow moving from continuous dynamics to algorithms via temporal discretization, and vice versa.

\subsection{From continuous dynamics to algorithms and vice versa}\label{continuous-discrete}
A general and successful recipe to pass from continuous gradient dynamics to gradient algorithms is to follow the following two-step procedure:

\begin{enumerate}[label=\roman*)]
\item First consider the implicit discretization of the continuous dynamic, and so obtain a proximal algorithm.
It is a well known fact that the implicit discretization usually preserves the asymptotic convergence properties of the continuous dynamic. This fact has been well documented for first-order evolution dynamics associated with convex optimization problems \cite{Pey_Sor}, and explains the importance of the proximal algorithm. This type of property is also directly linked with the exponential formula and the Trotter-Lie-Kato formula for the generation of contraction semigroups generated by maximally monotone operators. However, the proximal operator $\prox_{s f} \eqdef (I+s \nabla f)^{-1}$, $s > 0$, may not be easy to compute, which justifies the next step.

\item In the so obtained proximal algorithm, replace the proximal step associated with the operator $\prox_{s f}$ by a gradient step associated with the operator $I - s \nabla f$. By taking  $s$ sufficiently small (typically less than or equal to the inverse of the Lipschitz constant of the gradient of the function which is to be minimized), one can expect to preserve the convergence properties. 
\end{enumerate}
 
A major advantage of this procedure is that the proximal and gradient steps have a similar structure and are therefore likely to be combined in proximal gradient algorithms for structured optimization. 

\subsection{Passing from \eqref{eq:AVD} to \eqref{eq:NAG} by temporal discretization}\label{sec:implicit_discrete}
Let us illustrate the above procedure in the case of the \eqref{eq:AVD} dynamic.
Implicit time discretization of \eqref{eq:AVD}, with step size $h>0$, gives
\[
\frac{ x_{k+1} - 2x_{k}+ x_{k-1} }{h^2} +   \frac{\alpha}{kh} \frac{x_{k} - x_{k-1}}{h} + \nabla f( x_{k+1}) = 0.
\]
After multiplication by $s=h^2$, we obtain
\begin{equation}
(x_{k+1} -x_{k})- (x_{k} -x_{k-1}) + \frac{\alpha}{k}(x_{k} - x_{k-1}) + s \nabla f( x_{k+1})=0. \label{basic-eq-1}
\end{equation}
Equivalently
\begin{equation}
x_{k+1}  + s \nabla f( x_{k+1})= x_{k}+  \left( 1- \frac{\alpha}{k}\right)(x_{k} -x_{k-1}) , \label{basic-eq-2}
\end{equation}
which gives
\begin{equation}
x_{k+1} = \prox_{s f} \left(x_{k} +  \left( 1- \frac{\alpha}{k}\right)(x_{k} -x_{k-1}) \right). \label{basic-eq-3}
\end{equation}
So, we obtain the inertial proximal algorithm 
\begin{eqnarray*}
\begin{cases}
y_k=  \displaystyle{x_{k} + \pa{1 - \frac{\alpha}{k}}( x_{k}  - x_{k-1})} \\
x_{k+1} = \prox_{s f} \left( y_{k} \right).
\end{cases}
\end{eqnarray*}
This algorithm was initiated by G\"uler in \cite{Guler1,Guler2}. It is a key ingredient of the FISTA method \cite{BT}. Replacing the proximal step by a gradient step, we obtain the \eqref{eq:NAG} method.
%
\begin{remark}{
One may wonder if a full implicit discretization, which also involves the damping term, leads to the same algorithm. So consider
\[
\frac{ x_{k+1} - 2x_{k}+ x_{k-1} }{h^2} +   \frac{\alpha}{kh} \frac{x_{k+1} - x_{k}}{h} + \nabla f( x_{k+1}) = 0.
\]
Similar calculation as above gives the inertial proximal algorithm
\begin{eqnarray*}
\begin{cases}
y_k		=  {x_{k} + \frac{1}{1+ \frac{\alpha}{k}}( x_{k}  - x_{k-1})} \\
x_{k+1} = \prox_{\frac{s}{1+ \frac{\alpha}{k}} f} \left( y_{k} \right).
\end{cases}
\end{eqnarray*}
The gradient version of the above algorithm takes the form
\begin{eqnarray*}
\begin{cases}
y_k		=  \displaystyle{x_{k} + \frac{k}{k+\alpha}( x_{k}  - x_{k-1})} \\
x_{k+1} = y_k -\frac{sk}{k+\alpha}\nabla f \left( y_{k} \right).
\end{cases}
\end{eqnarray*}
The above form of the extrapolation coefficient, namely $\frac{k}{k+\alpha}$ is often used by practitioners, though they maintain the step size equal to $s$ rather than $\frac{sk}{k+\alpha}$, which is obviously asymptotically equal to $s$. This leads to results similar to those with the extrapolation coefficient $1 - \frac{\alpha}{k}$ which  is clearly asymptotically equivalent. This is well documented in \cite{AC2}, where the case of  general damping and extrapolation coefficients is considered.}
\end{remark}

\subsection{Passing from \eqref{eq:NAG} to \eqref{eq:AVD}}\label{NtoAVD}
We use here a standard limiting argument to pass from \eqref{eq:NAG} to \eqref{eq:AVD} (see also \cite{SBC}). First write \eqref{eq:NAG} equivalently as 
\[
x_{k+1} = x_{k} + \pa{1 - \frac{\alpha}{k}}( x_{k}  - x_{k-1}) - s \nabla f \Big(x_{k} + \pa{1 - \frac{\alpha}{k}}( x_{k}  - x_{k-1})\Big).
\]
Thus, with $s=h^2$, this is  also equivalent to
\begin{equation}\label{basic-NAG}
\frac{ x_{k+1} - 2x_{k}+ x_{k-1} }{h^2} +   \frac{\alpha}{kh} \frac{x_{k} - x_{k-1}}{h} + \nabla f(y_k)= 0.
\end{equation}
For each $k\in\N$, set $t_k=kh$, and we use the ansatz $x_k = X(t_k)$ for some smooth curve $t \mapsto X(t)$ defined for $t\geq t_0 >0$. Performing a Taylor expansion in powers of $h$, when $h$ is close  to zero, of the different quantities involved in \eqref{eq:NAG}, we obtain
\begin{eqnarray}
x_{k+1} &=& X(t_{k+1})= X(t_k) + h \dot{X}(t_k)+ \demi h^2 \ddot{X}(t_k) + \cO (h^3) \label{taylor1}\\
x_{k-1} &=& X(t_{k-1})= X(t_k) - h \dot{X}(t_k)+ \demi h^2 \ddot{X}(t_k) + \cO (h^3). \label{taylor2}
\end{eqnarray}
By adding \eqref{taylor1} and \eqref{taylor2},  we obtain 
\[
\frac{ x_{k+1} - 2x_{k}+ x_{k-1} }{h^2} =   \ddot{X}(t_k) + \cO (h).
\]
Moreover, \eqref{taylor2} gives
\[
\frac{x_{k} - x_{k-1}}{h} =   \dot{X}(t_k) + \cO (h).
\]
According to the $L$-Lipschitz continuity property of $\nabla f$, the definition of $y_k$ and $1-\alpha/k \leq 1$, we have
\begin{eqnarray*}
\| \nabla f \left( y_{k} \right) -\nabla f \left( x_{k} \right)\|
&\leq& L \| y_k -x_k  \|
\\
&\leq& L \|x_k -  x_{k-1}\|.
\end{eqnarray*}
Therefore
\begin{equation}\label{taylorgrad}
\nabla f \left( y_{k}\right) = \nabla f \left( X(t_k) \right) + \cO (h).
\end{equation}
Plugging \eqref{taylor1}, \eqref{taylor1} and \eqref{taylorgrad} into \eqref{basic-NAG}, we obtain
\begin{equation}\label{basic-NAG-2}
 \ddot{X}(t_k) +   \frac{\alpha}{t_k} \dot{X}(t_k) + \nabla f \left( X(t_k)\right) + \cO(h)= 0.
\end{equation}
When $h$ is small, we can neglect the $\cO(h)$ term which, at the limit, gives that $X(\cdot)$ follows the ODE \eqref{eq:AVD}.
%
 
\subsection{High resolution ODE of \eqref{eq:NAG}}
The high resolution  method is  extensively used in fluid mechanics, where physical phenomena occur at multiple scales, see for example \cite{Pedlosky} for a comprehensive presentation of geophysical fluid dynamics. The idea in our context is not to let $h \to 0$, but to take into account the terms of order $h=\sqrt{s}$ in the asymptotic expansions, and to discard the terms of order $h^2=s$ and higher. Moreover, to make the Hessian appear explicitly (see also the discussion on the system \eqref{eq:odetwo} above), we will have to refine the Taylor expansion \eqref{taylorgrad}. By doing so for \eqref{eq:NAG}, we now show that a Hessian-driven damping term appears in the associated continuous inertial ODE. This is a distinctly new feature and we are not aware of any such a result for \eqref{eq:NAG}.
\begin{theorem}\label{highresolutionnag}
Assume that $f$ is $\cC^2$. The high resolution ODE with temporal step size $\sqrt{s}$ of \eqref{eq:NAG} 
%
gives the inertial dynamic with Hessian driven damping
\begin{equation}\label{highresODENAG}
\ddot{X}(t) + \sqrt{s}\nabla^2 f (X(t))\dot{X}(t) 
+ \frac{\alpha}{t}\dot{X}(t) + \left(1+  \frac{\alpha \sqrt{s}}{2t} \right)  \nabla f (X(t)) =0 .
\end{equation}
\end{theorem}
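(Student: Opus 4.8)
The plan is to build directly on the limiting argument of Section~\ref{NtoAVD}, but instead of letting $h \to 0$ I will retain all terms of order $h = \sqrt{s}$ and discard only those of order $h^2 = s$ and smaller. The starting point is the equivalent form \eqref{basic-NAG} of \eqref{eq:NAG}, namely
\[
\frac{x_{k+1} - 2x_k + x_{k-1}}{h^2} + \frac{\alpha}{kh}\frac{x_k - x_{k-1}}{h} + \nabla f(y_k) = 0,
\]
together with the ansatz $x_k = X(t_k)$, $t_k = kh$, for a smooth curve $X$. The whole derivation is a matter of refining the Taylor expansions \eqref{taylor1}, \eqref{taylor2} and \eqref{taylorgrad} to the next order.

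First I would sharpen the finite-difference expansions. Adding \eqref{taylor1} and \eqref{taylor2} still gives the centered second difference as $\ddot{X}(t_k) + \cO(h^2)$, but for the first-order damping term I keep the $h$-correction in the backward difference,
\[
\frac{x_k - x_{k-1}}{h} = \dot{X}(t_k) - \demi h\, \ddot{X}(t_k) + \cO(h^2),
\]
so that $\frac{\alpha}{kh}\frac{x_k-x_{k-1}}{h} = \frac{\alpha}{t_k}\dot{X}(t_k) - \frac{\alpha h}{2t_k}\ddot{X}(t_k) + \cO(h^2/t_k)$. The crucial step, and the genuine source of the Hessian term, is to refine the gradient expansion \eqref{taylorgrad}: since $y_k - x_k = h\dot{X}(t_k) + \cO(h^2)$, a first-order Taylor expansion of $\nabla f$ (here the hypothesis $f \in \cC^2$ is used) yields
\[
\nabla f(y_k) = \nabla f(X(t_k)) + h\,\nabla^2 f(X(t_k))\dot{X}(t_k) + \cO(h^2),
\]
which produces exactly the term $\sqrt{s}\,\nabla^2 f(X)\dot{X}$.

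It then remains to recast the leftover correction $-\frac{\alpha h}{2t_k}\ddot{X}(t_k)$ into the gradient coefficient. For this I substitute the leading-order relation $\ddot{X} = -\nabla f(X) - \frac{\alpha}{t}\dot{X} + \cO(h)$ coming from \eqref{eq:AVD}; the viscous contribution enters only at order $h/t^2$ and is discarded, leaving $\frac{\alpha h}{2t_k}\nabla f(X(t_k))$, i.e. the correction $\frac{\alpha\sqrt{s}}{2t}$ to the coefficient of $\nabla f$. Collecting the surviving $\cO(1)$ and $\cO(h)$ contributions, replacing $h$ by $\sqrt{s}$, and dropping $\cO(h^2)$, gives precisely \eqref{highresODENAG}.

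The main obstacle is not any single computation but the bookkeeping: one must track two distinct small scales simultaneously, the step $h$ and the inverse time $1/t$, and be disciplined about which cross-terms (such as $h/t^2$) are genuinely negligible at the retained resolution. The one conceptual point deserving care is the substitution of the first-order ODE into the $h$-order correction term, which is legitimate precisely because any error it introduces is of order $h^2$, hence below the accuracy kept in the high-resolution expansion.
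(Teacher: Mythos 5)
Your derivation is correct and follows the paper's skeleton up to the decisive step: both start from \eqref{basic-NAG}, expand the second difference as $\ddot X(t_k) + \cO(h^2)$ and the backward difference as $\dot X(t_k) - \demi h \ddot X(t_k) + \cO(h^2)$, and produce the Hessian term by Taylor-expanding $\nabla f(y_k)$ (the paper keeps the prefactor $h\pa{1-\frac{\alpha}{k}}$ in front of $\nabla^2 f\,\dot X$, but the difference is $\cO(h^2)$ for $t \geq t_0$, so this is immaterial). Where you genuinely diverge is the treatment of the leftover correction $-\frac{\alpha h}{2t}\ddot X$. The paper collects the two $\ddot X$ contributions into $\pa{1 - \frac{\alpha}{2k}}\ddot X$, divides the whole equation by this factor, and then chooses the shifted time grid $t_k = h\pa{k - \frac{\alpha}{2}}$, which makes the damping coefficient exactly $\frac{\alpha}{t_k}$ and the gradient coefficient exactly $1 + \frac{\alpha h}{2t_k}$, so that every discarded term is genuinely $\cO(h^2)$ uniformly on $t \geq t_0$. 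You instead stay on the unshifted grid $t_k = kh$ and eliminate $\ddot X$ by substituting the low-resolution equation \eqref{eq:AVD}; the substitution itself is sound (its error is indeed $\cO(h^2)$, as you note), but it leaves the viscous residue $\frac{\alpha^2 h}{2t^2}\dot X$, which is $\cO(h)$ with a $1/t^2$ coefficient rather than $\cO(h^2)$, so dropping it is not covered by the announced rule ``keep order $\sqrt{s}$, discard order $s$'' and requires the extra convention you invoke about the second small scale $1/t$. The discrepancy is harmless because it is exactly the residue of the time reparameterization: $\frac{\alpha}{t - \alpha\sqrt{s}/2} = \frac{\alpha}{t} + \frac{\alpha^2 \sqrt{s}}{2t^2} + \cO(s)$, so your ODE and \eqref{highresODENAG} agree modulo terms that the high-resolution expansion cannot determine anyway. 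The trade-off is this: your route shows transparently where the coefficient $\frac{\alpha\sqrt{s}}{2t}$ of $\nabla f$ comes from (the half-step backward-difference correction pushed through the leading-order dynamics), whereas the paper's choice $c = -\frac{\alpha}{2}$ is precisely the device that removes the $1/t^2$ ambiguity and delivers the stated equation with nothing discarded beyond $\cO(s)$.
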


\begin{proof}
Recall the  equivalent formulations of \eqref{eq:NAG} in \eqref{basic-NAG} with $s=h^2$.
For each $k\in\N$, set $t_k \eqdef h(k+c)$ for a real parameter $c$ to be adjusted later, and use the ansatz that $x_k = X(t_k)$ for some  smooth curve  $t \mapsto X(t)$ defined for $t\geq t_0 >0$. Performing a Taylor expansion in powers of $h$, when $h$ is close  to zero, of the different quantities involved in \eqref{eq:NAG}, we obtain
\begin{eqnarray}
x_{k+1} &=& X(t_{k+1})= X(t_k) + h \dot{X}(t_k)+ \demi h^2 \ddot{X}(t_k) + + \frac{1}{6}h^3 \dddot{X}(t_k)+ 
\mathcal O (h^4) \label{taylor1-b}\\
x_{k-1} &=& X(t_{k-1})= X(t_k) - h \dot{X}(t_k)+ \demi h^2 \ddot{X}(t_k) - \frac{1}{6}h^3 \dddot{X}(t_k)+ 
\mathcal O (h^4). \label{taylor2-b}
\end{eqnarray}
By adding \eqref{taylor1-b} and \eqref{taylor2-b},  we obtain 
\[
\frac{ x_{k+1} - 2x_{k}+ x_{k-1} }{h^2} =   \ddot{X}(t_k) + \mathcal O (h^2).
\]
Moreover, \eqref{taylor2-b} gives
\[
\frac{x_{k} - x_{k-1}}{h} =   \dot{X}(t_k)- \demi h \ddot{X}(t_k)  + \mathcal O (h^2).
\]
We also have
\begin{eqnarray*}
\nabla f(y_k) = \nabla f \pa{x_{k} + \pa{1 - \frac{\alpha}{k}}( x_{k}  - x_{k-1})}
&=& \nabla f \pa{x_{k} + h\pa{1 - \frac{\alpha}{k}}\frac{x_{k} - x_{k-1}}{h}} \\ 
&=& \nabla f \pa{X(t_k)  + h\pa{1 - \frac{\alpha}{k}}\pa{\dot{X}(t_k)- \demi h \ddot{X}(t_k)  + \mathcal O (h^2)}}\\ 
&=& \nabla f \pa{X(t_k)  + h\pa{1 - \frac{\alpha}{k}} \dot{X}(t_k)  + \mathcal O (h^2)}\\
&=& \nabla f (X(t_k))   + h\pa{1 - \frac{\alpha}{k}} \nabla^2 f (X(t_k))\dot{X}(t_k)  + \mathcal O (h^2) . 
\end{eqnarray*}
Putting this with \eqref{taylor1-b} and \eqref{taylor2-b} into \eqref{basic-NAG}, we obtain
\begin{equation}\label{basic-NAG-2-b}
\ddot{X}(t_k) +   \frac{\alpha}{kh} \left(\dot{X}(t_k) - \demi h \ddot{X}(t_k) \right)+  \nabla f (X(t_k))   + h\pa{1 - \frac{\alpha}{k}} \nabla^2 f (X(t_k))\dot{X}(t_k)  + \mathcal O (h^2) = 0.
\end{equation}
Equivalently,
\begin{equation}\label{basic-NAG-2-c}
\pa{1-\frac{\alpha}{2k}} \ddot{X}(t_k) + \frac{\alpha}{kh} \dot{X}(t_k) +  \nabla f (X(t_k))   + h\pa{1 - \frac{\alpha}{k}} \nabla^2 f (X(t_k))\dot{X}(t_k)  + \mathcal O (h^2) = 0.
\end{equation}
Dividing by $\pa{1-\frac{\alpha}{2k}}$ gives 
\begin{equation*}
\ddot{X}(t_k) + \frac{\alpha}{h(k-\frac{\alpha}{2} )} \dot{X}(t_k) + \left(1+  \frac{\alpha h}{2h(k-\frac{\alpha}{2} )} \right)  \nabla f (X(t_k))   + h\left(1 - \frac{\frac{\alpha}{2}}{k -\frac{\alpha}{2}} \right) \nabla^2 f (X(t_k))\dot{X}(t_k)  + \mathcal O (h^2) = 0.
\end{equation*}
Set $c = -\frac{\alpha}{2}$ and thus $t_k \eqdef h(k-\frac{\alpha}{2})$. We obtain
\begin{equation*}
\ddot{X}(t_k) +   \frac{\alpha}{t_k} \dot{X}(t_k) + \left(1+  \frac{\alpha h}{2t_k} \right)  \nabla f (X(t_k))   + h\left(1 - \frac{\frac{\alpha h}{2}}{t_k} \right) \nabla^2 f (X(t_k))\dot{X}(t_k)  + \mathcal O (h^2) = 0.
\end{equation*}
By neglecting the term of order $s=h^2$, and keeping the terms of order $h= \sqrt{s}$, we obtain the claimed inertial dynamic with Hessian driven damping.
This completes the proof.
\end{proof}


\section{The dynamical system perspective of the Ravine method}\label{sec:dyn-perp}

Let us examine successively the low then the high resolution ODE of the Ravine method.

\subsection{Low resolution ODE of \eqref{eq:RAG}}

According to the definition of the Ravine method, we have
\begin{eqnarray*}
y_{k+1}&=& y_k- s\nabla f (y_k) + \pa{1 - \frac{\alpha}{k+1}} \Big( y_k- s\nabla f (y_k)  - ( y_{k-1}- s\nabla f (y_{k-1}) )\Big)\\
&= & y_k  + \pa{1 - \frac{\alpha}{k+1}}  (y_k- y_{k-1}) - s\nabla f (y_k) -s \pa{1 - \frac{\alpha}{k+1}} \Big( \nabla f (y_k) -\nabla f (y_{k-1} )\Big).
\end{eqnarray*}
After division by $s=h^2$, we obtain,   equivalently
\begin{equation}\label{dyn-Rav-1}
\frac{ y_{k+1} - 2y_{k}+ y_{k-1} }{h^2}  + \frac{\alpha}{kh+h} \frac{y_k- y_{k-1}}{h} +\nabla f (y_k)
+ \pa{1 - \frac{\alpha}{k+1}} ( \nabla f (y_k) -\nabla f (y_{k-1} )) =0.
\end{equation}
We now follow an argument similar to the one in section~\ref{NtoAVD}. For each $k\in\N$, set $t_k=kh$,
and assume that  $y_k = Y(t_k)$ for some  smooth curve  $t \mapsto Y(t)$ defined for $t\geq t_0 >0$. Performing a Taylor expansion in powers of $h$, when $h$ is close to zero, of the different quantities involved in \eqref{dyn-Rav-1}, we obtain
\[
\ddot{Y}(t_k) + \frac{\alpha}{t_k} \dot{Y}(t_k) +  \nabla f (Y(t_k)) + \cO (h)  =0.
\]
Letting $h\to 0$ gives that $Y(\cdot)$ is a solution trajectory of \eqref{eq:AVD}.
We therefore obtain the same inertial dynamics as that associated with \eqref{eq:NAG}.

\subsection{High resolution ODE of \eqref{eq:RAG}}
By letting $h \to 0$ in \eqref{dyn-Rav-1}, the term $\bpa{1 - \frac{\alpha}{k+1}} ( \nabla f (y_k) -\nabla f (y_{k-1} ))$ disappears at the limit. Indeed, as we will see, this term is numerically important. To take it into account, we will perform a high resolution of \eqref{dyn-Rav-1}. The approach will be similar to that
developed in \cite{SDJS}, which will make appear the Hessian-driven damping in the associated continuous inertial equation. This is made precise in the following theorem.
\begin{theorem}\label{highresolution}
Assume that $f$ is $\cC^2$. The high resolution ODE with temporal step size $\sqrt{s}$ of \eqref{eq:RAG} 
%
gives the inertial dynamic with Hessian driven damping
\begin{equation}\label{ODE101}
\ddot{Y}(t) + \frac{\alpha}{t}\dot{Y}(t) + \sqrt{s}\nabla^2 f (Y(t)) \dot{Y}(t) + \left(1+ \frac{\alpha \sqrt{s} }{2t} \right) \nabla f (Y(t)) = 0.
\end{equation}
\end{theorem}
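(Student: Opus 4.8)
The plan is to mimic the high resolution analysis already carried out for \eqref{eq:NAG} in Theorem~\ref{highresolutionnag}, but starting this time from the equivalent second-order difference formulation \eqref{dyn-Rav-1} of the Ravine iterates. I would introduce the ansatz $y_k = Y(t_k)$ for a smooth curve $t \mapsto Y(t)$, using a shifted time grid $t_k \eqdef h(k+c)$ in which the shift $c$ is left free and fixed only at the end to normalize the viscous damping coefficient.

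First I would record the Taylor expansions in powers of $h$ of each block of \eqref{dyn-Rav-1}: the centered second difference gives $\frac{y_{k+1}-2y_k+y_{k-1}}{h^2} = \ddot Y(t_k) + \mathcal O(h^2)$, and the backward difference gives $\frac{y_k - y_{k-1}}{h} = \dot Y(t_k) - \demi h \ddot Y(t_k) + \mathcal O(h^2)$, exactly as in the \eqref{eq:NAG} computation. The genuinely new ingredient is the gradient difference $\nabla f(y_k) - \nabla f(y_{k-1})$; since $f$ is $\cC^2$, expanding $\nabla f$ to first order yields $\nabla f(y_k) - \nabla f(y_{k-1}) = h\,\nabla^2 f(Y(t_k))\dot Y(t_k) + \mathcal O(h^2)$. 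As this quantity is already of order $h$, the weight $\bpa{1 - \frac{\alpha}{k+1}}$ multiplying it may be replaced by $1$ at the cost of an $\mathcal O(h^2)$ error (because $\frac{\alpha}{k+1} = \mathcal O(h)$), so this whole block contributes $h\,\nabla^2 f(Y(t_k))\dot Y(t_k) + \mathcal O(h^2)$. This is precisely where the Hessian-driven damping enters.

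Substituting these expansions into \eqref{dyn-Rav-1}, the damping term produces the coefficient $\bpa{1 - \frac{\alpha}{2(k+1)}}$ in front of $\ddot Y(t_k)$. I would then divide through by this factor, using $\frac{1}{1 - \frac{\alpha}{2(k+1)}} = 1 + \frac{\alpha/2}{k+1 - \alpha/2}$. Choosing the shift $c = 1 - \frac{\alpha}{2}$, so that $t_k = h\bpa{k+1 - \frac{\alpha}{2}}$, turns the viscous term into $\frac{\alpha}{t_k}\dot Y(t_k)$ and converts the gradient prefactor into $1 + \frac{\alpha h}{2 t_k}$, while the Hessian term stays at order $h$ (the division perturbs it only by $\mathcal O(h^2)$). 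Discarding all $\mathcal O(h^2) = \mathcal O(s)$ terms and setting $h = \sqrt s$ then yields \eqref{ODE101}.

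The computation is routine and structurally parallel to the \eqref{eq:NAG} case; the only place demanding care is the bookkeeping of orders in $h$. In particular I would emphasize that the weight $\bpa{1-\frac{\alpha}{k+1}}$ in front of the gradient difference is asymptotically $1$ only up to an $\mathcal O(h)$ correction, and that this correction is harmless precisely because it multiplies a factor already of order $h$; overlooking it would spuriously alter the Hessian coefficient. The choice of the time shift $c = 1-\frac{\alpha}{2}$ is the one genuinely delicate step, playing the same normalizing role that $c = -\frac{\alpha}{2}$ did for \eqref{eq:NAG}.
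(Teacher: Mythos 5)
Your proposal is correct and takes essentially the same approach as the paper: Taylor expansion of the difference formulation \eqref{dyn-Rav-1} under the ansatz $y_k = Y(t_k)$ with a shifted grid $t_k = h(k+c)$, the Hessian damping emerging from the gradient-difference term at order $h$, and the normalization $c = 1-\frac{\alpha}{2}$ fixed at the end. The only (cosmetic) difference is that the paper first rearranges \eqref{dyn-Rav-1} algebraically---multiplying by $\frac{k+1}{k+1-\alpha}$ and switching the backward difference to a forward one---before expanding, whereas you expand directly and absorb the weight $\bpa{1-\frac{\alpha}{k+1}}$ on the gradient difference into the $\cO(h^2)$ error, a valid shortcut that leads to the same equation.
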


\begin{proof}
Let us start from the equivalent formulation \eqref{dyn-Rav-1} of \eqref{eq:RAG}, which we rewrite as follows
\begin{equation}\label{dyn-Rav-1c}
\frac{ y_{k+1} - 2y_{k}+ y_{k-1} }{h^2}  + \frac{\alpha}{k+1} \frac{y_k- y_{k-1}}{h^2} +\nabla f (y_k)
+ \frac{k+1-\alpha}{k+1} ( \nabla f (y_k) -\nabla f (y_{k-1} )) =0.
\end{equation}

\smallskip

\noindent Let us arrange the above formula, so as to prepare it for its analysis by Taylor expansion. After multiplying \eqref{dyn-Rav-1c} by $\frac{k+1}{k+1-\alpha}$, we get

\begin{small}
\begin{equation}\label{dyn-Rav-1d}
\frac{k+1}{k+1-\alpha}\frac{ y_{k+1} - 2y_{k}+ y_{k-1} }{h^2}  + \frac{\alpha}{k+1-\alpha} \frac{y_k- y_{k-1}}{h^2} +\frac{k+1}{k+1-\alpha} \nabla f (y_k)
+   \nabla f (y_k) -\nabla f (y_{k-1} ) =0.
\end{equation}
\end{small}

\smallskip

\noindent Notice then that 
$$
\frac{y_k- y_{k-1}}{h^2} = \frac{y_{k+1} -y_k}{h^2} - \frac{ y_{k+1} - 2y_{k}+ y_{k-1} }{h^2}. 
$$
Thus, \eqref{dyn-Rav-1d} can be  formulated equivalently as follows
\begin{small}
\begin{eqnarray*}
&&\Big(\frac{k+1}{k+1-\alpha} - \frac{\alpha}{k+1-\alpha} \Big) \frac{ y_{k+1} - 2y_{k}+ y_{k-1} }{h^2}  + \frac{\alpha}{k+1-\alpha} \frac{y_{k+1} -y_k}{h^2}\\
&& +\frac{k+1}{k+1-\alpha} \nabla f (y_k)
+   \nabla f (y_k) -\nabla f (y_{k-1} ) =0.
\end{eqnarray*}
\end{small}

\noindent After reduction we obtain, equivalently
\begin{small}
\begin{equation}\label{dyn-Rav-1e}
 \frac{ y_{k+1} - 2y_{k}+ y_{k-1} }{h^2}  + \frac{\alpha}{(k+1-\alpha)h} \frac{y_{k+1} -y_k}{h} +\left( 1+ \frac{\alpha}{k+1-\alpha}\right) \nabla f (y_k)
+   \nabla f (y_k) -\nabla f (y_{k-1} ) =0.
\end{equation}
\end{small}
\noindent Building on \eqref{dyn-Rav-1e}, we are now following  a device similar to the one developed in section \ref{NtoAVD}, and which uses Taylor expansions, but now taken at a higher order. 
For each $k\in\N$, set $t_k\eqdef(k+c)h $, where $c$ is a real parameter that will be adjusted later.
Assume that  $y_k = Y(t_k)$ for some  smooth curve  $t \mapsto Y(t)$ defined for $t\geq t_0 >0$. Performing a Taylor expansion in powers of $h$, when $h$ is close  to zero, of the different quantities involved in \eqref{dyn-Rav-1e}, we obtain
\begin{eqnarray}
y_{k+1} &=& Y(t_{k+1})= Y(t_k) + h \dot{Y}(t_k)+ \demi h^2 \ddot{Y}(t_k) + \frac{1}{6}h^3 \dddot{Y}(t_k)+ 
\cO (h^4) \label{taylor1c}\\
y_{k-1} &=& Y(t_{k-1})= Y(t_k) - h \dot{Y}(t_k)+ \demi h^2 \ddot{Y}(t_k)  - \frac{1}{6}h^3 \dddot{Y}(t_k)+ 
\cO (h^4) . \label{taylor2c}
\end{eqnarray}
By adding \eqref{taylor1c} and \eqref{taylor2c}  we obtain 
$$
\frac{ y_{k+1} - 2y_{k}+ y_{k-1} }{h^2} =   \ddot{Y}(t_k) + \cO (h^2).
$$
Moreover, \eqref{taylor1c} gives
$$
\frac{y_{k+1} - y_{k}}{h} =   \dot{Y}(t_k) + \demi h \ddot{Y}(t_k)  + \cO (h^2).
$$
\noindent  By Taylor expansion of $\nabla f$  we have
$$ \nabla f (y_k) -\nabla f (y_{k-1})=   
 \nabla^2 f (Y(t_k)) \dot{Y}(t_k)h + \cO \left(h^2\right).
$$
Plugging all of the above results into \eqref{dyn-Rav-1e}, we obtain
\begin{multline*}
\bpa{\ddot{Y}(t_k)+\cO (h^2)} +  \frac{\alpha}{(k+1-\alpha)h}\bpa{\dot{Y}(t_k)+ \demi h \ddot{Y}(t_k)  + \cO (h^2)} \\
+  \frac{k+1}{k+ 1-\alpha} \nabla f (Y(t_k)) + \bpa{h\nabla^2 f (Y(t_k)) \dot{Y}(t_k) + \cO \left( h^2\right)}=0.
\end{multline*}
After multiplication by $\frac{(k+1-\alpha)h}{\alpha} $, and reduction of the terms involving $\ddot{Y}(t_k)$,  we obtain 
\begin{eqnarray*}
\frac{h}{\alpha}\pa{k+1-\frac{\alpha}{2}}\ddot{Y}(t_k) +  \dot{Y}(t_k) +  \frac{(k+1)h}{\alpha} \nabla f (Y(t_k)) 
+ h\frac{(k+1-\alpha)h}{\alpha} \nabla^2 f (Y(t_k)) \dot{Y}(t_k) +\cO (h^3)  =0.
\end{eqnarray*}
Dividing by $\frac{h}{\alpha}(k+1-\frac{\alpha}{2})$ yields 
\begin{multline*}
\ddot{Y}(t_k) + \frac{\alpha}{(k+1-\frac{\alpha}{2})h} \dot{Y}(t_k) + \left(1+   \frac{\frac{\alpha}{2}}{k+1-\frac{\alpha}{2}} \right)\nabla f (Y(t_k)) \\
+ h\left(1-\frac{\frac{\alpha}{2}}{k+1-\frac{\alpha}{2}} \right)\nabla^2 f (Y(t_k)) \dot{Y}(t_k) +\cO (h^2)  =0.
\end{multline*}
Take $c=1-\frac{\alpha}{2}$ and thus $t_k \eqdef (k+1-\frac{\alpha}{2})h$. We obtain 
\begin{eqnarray*}
&&\ddot{Y}(t_k) + \frac{\alpha}{t_k} \dot{Y}(t_k) +  \left(   1+ 
\frac{\alpha h }{2t_k} \right) \nabla f (Y(t_k)) 
+ h\left(1-   \frac{\alpha h}{2t_k} \right) \nabla^2 f (Y(t_k)) \dot{Y}(t_k) +\cO (h^2)  =0.
\end{eqnarray*}
By neglecting the term of order $s=h^2$, and keeping the terms of order $h= \sqrt{s}$, we obtain the claimed inertial dynamic with Hessian driven damping.
This completes the proof. 
\end{proof}

A few remarks are in order.

\begin{remark}
The high resolution ODE's of \eqref{eq:RAG} and \eqref{eq:NAG} have the same structure but have also differences. First, they are given in terms of two different variables: $x$ for \eqref{eq:NAG} and $y$ for \eqref{eq:RAG}. Observe also that to get the high resolution ODE \eqref{highresODENAG}, the Hessian appears after applying an extra Taylor expansion on the gradient, which is reminiscent of our discussion on the implicit Hessian damping ODE \eqref{eq:odetwo}. On the other hand, the Hessian appears from an explicit discretization in the ODE \eqref{ODE101} associated to \eqref{eq:RAG}.  
\end{remark}

\begin{remark}
Recall that the dynamic with Hessian driven damping \eqref{eq:DINAVD} which supports the inertial gradient algorithm \eqref{eq:IGAHD} developed in \cite{ACFR} is given by
\begin{equation}\label{ODE10}
\ddot{x}(t) + \frac{\alpha}{t}\dot{x}(t) + \beta \nabla^2 f (x(t)) \dot{x}(t) + \left(1 + \frac{\beta}{t} \right) \nabla f (x(t))  =0.
\end{equation}
It is in accordance with the high resolution ODE's \eqref{highresODENAG} and \eqref{ODE101} of respectively \eqref{eq:NAG} and \eqref{eq:RAG}, and allows to interpret the Hessian driven damping coefficient $\beta$ of \eqref{ODE10} as a temporal step size. This also paves the way to proving fast convergence to zero of the gradients which is the subject of the next section.
\end{remark}

\section{Fast convergence to zero of the gradients for \eqref{eq:RAG} and \eqref{eq:NAG}}

Let us come to another central point of our study, which concerns the fast convergence towards zero of the gradients. Closely related results were obtained in \cite{ACFR}, and \cite{SDJS,Shi19} in different contexts and discretizations. In \cite{ACFR}, the algorithm considered is \eqref{eq:IGAHD}, whose underlying dynamic is \eqref{eq:DINAVD}, but the discretization is different. It is inspired by the Nesterov scheme, which  contrasts with the  Ravine method which is based on an explicit discretization scheme. In \cite{SDJS}, the structure of the algorithm is the same as that of \eqref{eq:RAG}, but the extrapolation parameter is different, which requires an independent proof. Other discretizations are also discussed in \cite{Shi19} after a first-order equivalent reformulation of \eqref{ODE10}. The fast convergence rates on the gradients shown in \cite{SDJS,Shi19} turn out to be weaker than ours. Observe also that developing the Ravine method with a general extrapolation parameter, as was done by Attouch and Cabot in \cite{AC2} for the accelerated gradient method, is an interesting research venue that we leave to a future work.

\subsection{The case of \eqref{eq:RAG}}
For the convenience of the reader, we give a self-contained proof, which is based on Lyapunov analysis. We will rely on the following equivalent formulation of the Ravine method which was obtained in \eqref{dyn-Rav-1}, and which gives rise to the dynamic interpretation with the damping driven by the Hessian:
\begin{equation}\label{eq:RAGequiv}
\frac{ y_{k+1} - 2y_{k}+ y_{k-1} }{h^2}  + \frac{\alpha}{(k+1-\alpha)h} \frac{y_{k+1} -y_k}{h} +\left( 1+ \frac{\alpha}{k+1-\alpha}\right) \nabla f (y_k) + \nabla f (y_k) -\nabla f (y_{k-1} ) =0.
\end{equation}
To make the notations shorter, it is convenient to introduce the 
discrete velocity $v_k$ which is defined for each  $k\in \N$ by
\[
v_k=\frac{1}{h}(y_{k+1} - y_{k}).
\]
So, the constitutive equation \eqref{eq:RAGequiv} can be equivalently written as
\begin{equation}\label{dyn-Rav-110}
 v_k - v_{k-1} =   - \frac{\alpha}{(k+1-\alpha)} v_k  - h  \Big( \nabla f (y_k) -\nabla f (y_{k-1} \Big) - h\frac{k+1}{k+1-\alpha} \nabla f (y_k).
\end{equation}

\noindent 
Given  $x^\star \in \argmin_{\cH}(f)$,  our Lyapunov analysis is based on the energy sequence $\seq{E_k}$  defined by
\begin{eqnarray}
E_k &\eqdef& h^2(k+2-\alpha)(k+1)( f (y_k)-  f( x^\star) ) +\frac{1}{2}\|z_k\|^2 \label{Lyap-function-grad1b}\\
z_k &\eqdef& (\alpha-1)(y_{k+1}  - x^\star) + h(k+2-\alpha)\Big( v_{k} + h \nabla f( y_{k})\Big).\label{Lyap-function-grad2b}
\end{eqnarray}

\begin{theorem}\label{pr.decay_E_k}
Let  $f: \cH \to \R$ be a $\cC^1$ convex function whose gradient is $L$-Lipschitz continuous. Let $\seq{y_k}$ be the sequence generated by \eqref{eq:RAG}, where $\alpha \geq 3$ and $sL < 1$. Then the  sequence  $\seq{E_k}$  defined by {\rm\eqref{Lyap-function-grad1b}-\eqref{Lyap-function-grad2b}} is non-increasing, and the following convergence rate is satisfied:
\[
\sum_k  k^2 \| \nabla f (y_k) \|^2 < +\infty .
\]
In addition, when $\alpha > 3$,
\[
\sum_k  k( f(y_k)-  f( x^\star) )  < +\infty .
\]
\end{theorem}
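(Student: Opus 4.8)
The plan is to run a discrete Lyapunov analysis on the sequence $\seq{E_k}$ defined in \eqref{Lyap-function-grad1b}-\eqref{Lyap-function-grad2b}, computing $E_{k+1}-E_k$ and showing it is bounded above by a nonpositive quantity that additionally captures both the gradient term $k^2\|\nabla f(y_k)\|^2$ and, when $\alpha>3$, the value gap $k(f(y_k)-f(x^\star))$. The summability statements will then follow by telescoping: once we know $\seq{E_k}$ is nonincreasing and bounded below by $0$, the accumulated negative terms in $\sum_k (E_k - E_{k+1}) = E_1 - \lim_k E_k < +\infty$ force the corresponding series to converge.

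The core computation is the estimate of $\demi\pa{\|z_{k+1}\|^2 - \|z_k\|^2}$. I would write $z_{k+1} = z_k + (z_{k+1}-z_k)$, expand $\|z_{k+1}\|^2 = \|z_k\|^2 + 2\dotp{z_k}{z_{k+1}-z_k} + \|z_{k+1}-z_k\|^2$, and substitute the constitutive equation \eqref{dyn-Rav-110} to express the increment $z_{k+1}-z_k$ in terms of $v_k$, $\nabla f(y_k)$, and the gradient difference $\nabla f(y_k)-\nabla f(y_{k-1})$. The definition of $z_k$ is engineered precisely so that the dominant inertial contributions cancel against the velocity update; what survives should be a cross term of the form $-C h^2 (k+1)\dotp{y_{k+1}-x^\star}{\nabla f(y_k)}$. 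The convexity inequality $\dotp{y_{k+1}-x^\star}{\nabla f(y_k)} \geq f(y_{k+1}) - f(x^\star) + \dotp{y_{k+1}-y_k}{\nabla f(y_k)}$ (or the two-point version combining $y_k$ and $y_{k+1}$) then trades this cross term against the energy part $h^2(k+2-\alpha)(k+1)(f(y_k)-f(x^\star))$, so that the value-gap contributions to $E_{k+1}-E_k$ collapse to a controlled coefficient. The descent inequality $f(y_{k+1}) \leq f(y_k) - s\demi\|\nabla f(y_k)\|^2$-type bound, available because $sL<1$ guarantees a usable smoothness estimate on the gradient step $w_k = y_k - s\nabla f(y_k)$, is what produces the crucial $-c\, k^2\|\nabla f(y_k)\|^2$ term on the right-hand side.

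The main obstacle I anticipate is the careful bookkeeping of the discrete coefficients and ensuring that, after all cancellations, the residual terms have the correct sign under the hypotheses $\alpha\geq 3$ and $sL<1$. In particular, the gradient-difference term $h(\nabla f(y_k)-\nabla f(y_{k-1}))$ in \eqref{dyn-Rav-110} (the discrete Hessian damping) must be handled delicately: it generates a term that, after pairing with the velocity, should be absorbed either by convexity or by the strict inequality $sL<1$, which provides the slack needed to dominate the $L$-Lipschitz error bound $\|\nabla f(y_k)-\nabla f(y_{k-1})\| \leq L\|y_k-y_{k-1}\|$. I expect the condition $\alpha\geq 3$ to be exactly what keeps the coefficient $(k+2-\alpha)(k+1)$ positive and the inertial cancellation valid for large $k$, while the strict version $\alpha>3$ yields a spare positive multiple of $k(f(y_k)-f(x^\star))$ in the descent, which is precisely the extra summable quantity claimed in the second conclusion. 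Verifying that these leftover coefficients are genuinely nonnegative for all $k$ large enough (and adjusting the argument for small $k$ by noting $\seq{E_k}$ is eventually monotone) is the delicate part; the rest is telescoping.
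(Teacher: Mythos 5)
Your proposal reproduces the paper's overall architecture (the same Lyapunov sequence $\seq{E_k}$, expansion of $\demi\|z_{k+1}\|^2-\demi\|z_k\|^2$, substitution of the constitutive equation \eqref{dyn-Rav-110}, convexity to trade cross terms against value gaps, and telescoping), but the two mechanisms you invoke at the critical moments are not the ones that make the proof work, and as stated they would fail. First, you claim the crucial $-c\,k^2\|\nabla f(y_k)\|^2$ term is produced by a descent inequality of the type $f(y_{k+1})\leq f(y_k)-\frac{s}{2}\|\nabla f(y_k)\|^2$ coming from the gradient step $w_k=y_k-s\nabla f(y_k)$. No such inequality is available: $y_{k+1}$ is not the gradient step from $y_k$ but $w_k$ plus an extrapolation $\pa{1-\frac{\alpha}{k+1}}(w_k-w_{k-1})$, and the extrapolation can increase $f$. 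In the paper the negative gradient terms come out of the algebra itself: the constitutive equation yields the exact identity $z_{k+1}-z_k=-h^2(k+1)\nabla f(y_{k+1})$ (note the gradient at $y_{k+1}$, not $y_k$), and all quadratic-in-gradient contributions are then collected into a quadratic form $R\bpa{\nabla f(y_k),\nabla f(y_{k+1})}$ whose positivity is what delivers the $k^2\|\nabla f(y_{k+1})\|^2$ margin.

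Second, your plan to control the discrete Hessian-damping term $h\bpa{\nabla f(y_k)-\nabla f(y_{k-1})}$ via the Lipschitz bound $\|\nabla f(y_k)-\nabla f(y_{k-1})\|\leq L\|y_k-y_{k-1}\|$ goes in the wrong direction: it reintroduces velocity terms $\|v_{k-1}\|^2$, and $E_k$ contains no negative velocity-squared term capable of absorbing them. The paper's proof does the opposite: it uses the constitutive equation a second time to write $(k+2-\alpha)v_k-(k+2)v_{k+1}=-h(k+2-\alpha)\nabla f(y_k)+h(2k+4-\alpha)\nabla f(y_{k+1})$, thereby eliminating velocities entirely, and it replaces the plain Lipschitz bound by the refined convexity (co-coercivity) inequality
\[
f(y_k)\;\geq\; f(y_{k+1})+\dotp{\nabla f(y_{k+1})}{y_k-y_{k+1}}+\frac{1}{2L}\|\nabla f(y_{k+1})-\nabla f(y_k)\|^2 ,
\]
whose extra $\frac{1}{2L}$ term is exactly the positive-definiteness slack needed. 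The conclusion then follows by minimizing $R(\cdot,Y)$ over its first argument, which produces the coefficient $k(1-sL)+sL(\alpha-2)+3-2\alpha$; this is precisely where $sL<1$ and $\alpha\geq 3$ enter (and $\alpha>3$ enters through the value-gap coefficient $k(\alpha-3)+2\alpha-5$, not through positivity of $(k+2-\alpha)(k+1)$ as you suggest). Without these two ingredients --- velocity elimination via the dynamics and the co-coercivity refinement --- your outline stalls at the point where the gradient and velocity cross terms must be signed.
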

\begin{proof}
By definition of $E_k$, we have
\begin{eqnarray}
E_{k+1} - E_k &=& h^2(k+2-\alpha)(k+1)( f (y_{k+1})-  f (y_{k}) ) +h^2 (2k+4-\alpha)( f (y_{k+1})-  f( x^\star) ) \nonumber \\
&+&\frac{1}{2}\|z_{k+1}\|^2 -\frac{1}{2}\|z_{k}\|^2   \label{Lyap-10} . 
\end{eqnarray}
Let us compute this last expression with the help of the elementary identity
\begin{equation}\label{elem-ineq}
\frac{1}{2}\|z_{k+1}\|^2 -\frac{1}{2}\|z_k\|^2  = \left\langle  z_{k+1} -z_{k} , z_{k+1} \right\rangle - \frac{1}{2}\|z_{k+1} - z_{k}\|^2  .
\end{equation}
First observe that the constitutive equation \eqref{dyn-Rav-110} gives
\begin{equation}\label{transform}
( v_k +h \nabla f (y_k))  -( v_{k-1}+h \nabla f (y_{k-1})) =   - \frac{\alpha}{(k+1-\alpha)} v_k 
 -h\ \frac{k+1}{k+1-\alpha} \nabla f (y_k).
\end{equation}
Therefore,
\begin{equation*}
(k+1-\alpha)( v_k +h \nabla f (y_k))  -(k+1-\alpha)( v_{k-1}+h \nabla f (y_{k-1}) =   - \alpha v_k 
 -h (k+1) \nabla f (y_k).
\end{equation*}
Equivalently,
\begin{equation}\label{dyn-Rav-112}
(k+1)( v_k +h \nabla f (y_k))  -(k+1-\alpha)( v_{k-1}+h \nabla f (y_{k-1})) =    
 -h (k+1 -\alpha) \nabla f (y_k).
\end{equation}
Using successively the definition of $z_k$ and \eqref{dyn-Rav-112}, we obtain
\begin{eqnarray*}
z_{k+1} - z_{k}&=& (\alpha-1)(y_{k+2}  - y_{k+1})\\
&& + h(k+3-\alpha)\Big( v_{k+1} + h \nabla f( y_{k+1})  \Big) - h(k+2-\alpha)\Big( v_{k} + h \nabla f( y_{k})  \Big)\\
&=& h(\alpha-1) v_{k+1} + h(k+3-\alpha)\Big( v_{k+1} + h \nabla f( y_{k+1})  \Big) - h(k+2-\alpha)\Big( v_{k} + h \nabla f( y_{k})  \Big)  \\
&=& h(k+2)\Big( v_{k+1} + h \nabla f( y_{k+1})  \Big) - h(k+2-\alpha)\Big( v_{k} + h \nabla f( y_{k})  \Big)  -h^2(\alpha-1)\nabla f( y_{k+1})\\
&=& - h^2 (k+2 -\alpha) \nabla f (y_{k+1})-h^2(\alpha-1)\nabla f( y_{k+1})\\
&=& - h^2 (k+1) \nabla f (y_{k+1}).
\end{eqnarray*}
Plugging this into \eqref{elem-ineq}, we obtain
\begin{eqnarray*}
&&\frac{1}{2}\|z_{k+1}\|^2 -\frac{1}{2}\|z_k\|^2  =  - \frac{1}{2}h^4 (k+1)^2 \| \nabla f (y_{k+1}) \|^2 \\
&& - h^2( k+1) \left\langle   \nabla f (y_{k+1}) , (\alpha-1)(y_{k+1}  - x^\star) + h(k+2-\alpha)\Big( v_{k} + h \nabla f( y_{k}) \Big) - h^2 (k+1) \nabla f (y_{k+1}) \right\rangle\\
&&= \demi  h^4 (k+1)^2 \| \nabla f (y_{k+1}) \|^2 \\
&& -  h^2 (k+1) \left\langle   \nabla f (y_{k+1}) , (\alpha-1)(y_{k+1}  - x^\star) + h(k+2-\alpha)\Big( v_{k} + h \nabla f( y_{k}) \Big)\right\rangle.
\end{eqnarray*}
Let us rearrange this last expression so that we have terms involving only $\nabla f (y_{k+1})$. For this, we use \eqref{transform} that we write as follows
\[
v_k +h \nabla f (y_k)) = v_{k+1} +h \nabla f (y_{k+1}) + \frac{\alpha}{k+2-\alpha} v_{k+1} +h \frac{k+2}{k+2-\alpha}\nabla f (y_{k+1}).
\]
Therefore,
\begin{eqnarray*}
&&(\alpha-1)(y_{k+1}  - x^\star) + h(k+2-\alpha)\Big( v_{k} + h \nabla f( y_{k})\Big)\\
&&= (\alpha-1)(y_{k+1}  - x^\star) + h(k+2-\alpha)\Big( v_{k+1} +h \nabla f (y_{k+1}) + \frac{\alpha}{k+2-\alpha}  v_{k+1} 
 +h \frac{k+2}{k+2-\alpha} \nabla f (y_{k+1}) \Big)\\
 &&= (\alpha-1)(y_{k+1}  - x^\star) + h(k+2) v_{k+1} 
 +h^2 (2k+4-\alpha) \nabla f (y_{k+1}).
\end{eqnarray*}
Collecting the above results we obtain
\begin{multline*}
\frac{1}{2}\|z_{k+1}\|^2 -\frac{1}{2}\|z_k\|^2  =   \frac{1}{2}h^4 (k+1)^2 \| \nabla f (y_{k+1}) \|^2 \\
 - h^2 (k +1)\left\langle   \nabla f (y_{k+1}) ,  (\alpha-1)(y_{k+1}  - x^\star) + h(k+2) v_{k+1} 
 +h^2 (2k+4-\alpha) \nabla f (y_{k+1})
\right\rangle .
\end{multline*}
Combining this inequality with \eqref{Lyap-10} we get
\begin{equation}\label{basicEk}
\begin{aligned}
E_{k+1} - E_k &= h^2(k+2-\alpha)(k+1)( f (y_{k+1})-  f (y_{k}) ) +h^2 (2k+4-\alpha)( f (y_{k+1})-  f( x^\star) ) \\
&+\frac{1}{2}h^4 (k+1)^2 \| \nabla f (y_{k+1}) \|^2  \\
&- h^2 (k+1) \left\langle   \nabla f (y_{k+1}) ,  (\alpha-1)(y_{k+1}  - x^\star) + h(k+2) v_{k+1}  +h^2 (2k+4-\alpha) \nabla f (y_{k+1})
\right\rangle. 
\end{aligned}
\end{equation}
According to the basic gradient inequality for convex differentiable functions whose gradient is $L$-Lipschitz continuous, we have
\begin{eqnarray*}
  f (y_{k}) &\geq&  f (y_{k+1})+  \left\langle   \nabla f (y_{k+1}) ,  y_k - y_{k+1} \right\rangle + \frac{1}{2L}\|\nabla f (y_{k+1}) -\nabla f (y_{k})  \|^2 ,\\
f( x^\star) &\geq&  f (y_{k+1})+  \left\langle   \nabla f (y_{k+1}) ,  x^\star - y_{k+1} \right\rangle.
\end{eqnarray*}
Combining the above inequalities with \eqref{basicEk}, we obtain
\begin{eqnarray*}
 && E_{k+1} - E_k \leq -h^2(k+2-\alpha)(k+1)\Big(\left\langle   \nabla f (y_{k+1}) ,  y_k - y_{k+1} \right\rangle + \frac{1}{2L}\|\nabla f (y_{k+1}) -\nabla f (y_{k})  \|^2 \Big) \\
&& +h^2 (2k+4-\alpha)( f (y_{k+1})-  f( x^\star) ) + h^2 (k+1) (\alpha -1)( f( x^\star) - f (y_{k+1}) )  \\
&& +\frac{1}{2}h^4 (k+1)^2 \| \nabla f (y_{k+1}) \|^2 - h^2 (k+1) \left\langle   \nabla f (y_{k+1}) ,   h(k+2) v_{k+1}  +h^2 (2k+4-\alpha) \nabla f (y_{k+1})
\right\rangle. 
\end{eqnarray*}
Equivalently,
\begin{eqnarray*}
&& E_{k+1} - E_k \leq h^2(k+2-\alpha)(k+1)\Big(\left\langle   \nabla f (y_{k+1}) ,  h v_k  \right\rangle - \frac{1}{2L}\|\nabla f (y_{k+1}) -\nabla f (y_{k})  \|^2 \Big) \\
&& -h^2 \Big( k(\alpha-3) +2\alpha -5 \Big) ( f (y_{k+1})-  f( x^\star) ) - h^2 (k +1)\left\langle   \nabla f (y_{k+1}) ,   h(k+2) v_{k+1}  \right\rangle \\
&& -\frac{1}{2}h^4 (k+1) (3k +7 -2\alpha) \| \nabla f (y_{k+1}) \|^2 . 
\end{eqnarray*}
Let us put together the terms involving the scalar product with 
$\nabla f (y_{k+1})$. We get
\begin{eqnarray*}
&& E_{k+1} - E_k \leq -h^2(k+2-\alpha)(k+1) \frac{1}{2L}\|\nabla f (y_{k+1}) -\nabla f (y_{k})  \|^2  \\
&&+ h^3(k+1)\left\langle   \nabla f (y_{k+1}) , (k+2-\alpha) v_k -(k+2) v_{k+1}  \right\rangle \\
&& -h^2 \Big( k(\alpha-3) +2\alpha -5 \Big) ( f (y_{k+1})-  f( x^\star) )  
-\frac{1}{2}h^4 (k+1) (3k +7 -2\alpha) \| \nabla f (y_{k+1}) \|^2 . 
\end{eqnarray*}
According to \eqref{dyn-Rav-112} we have 
\begin{equation}\label{dyn-Rav-112-bb}
(k+2-\alpha)( v_{k}+h \nabla f (y_{k}))-(k+2)( v_{k+1} +h \nabla f (y_{k+1}))   =    
 h (k+2 -\alpha) \nabla f (y_{k+1}).
 \end{equation}
Therefore
\begin{equation}\label{dyn-Rav-112-cc}
(k+2-\alpha) v_{k}-(k+2) v_{k+1} =  - h (k+2 -\alpha) \nabla f (y_{k}) +h(2k +4-\alpha) \nabla f (y_{k+1}).
\end{equation}
Combining the above results we get
\begin{eqnarray*}
&& E_{k+1} - E_k  +h^2 \Big( k(\alpha-3) +2\alpha -5 \Big) ( f (y_{k+1})-  f( x^\star) )\\
&&\leq -\frac{h^2}{2L}(k+2-\alpha)(k+1) \|\nabla f (y_{k+1}) -\nabla f (y_{k})  \|^2  \\
&&+ h^3(k+1)\Big(\left\langle   \nabla f (y_{k+1}) ,
- h (k+2 -\alpha) \nabla f (y_{k}) +h(2k +4-\alpha) \nabla f (y_{k+1}) \right\rangle \Big)\\
&& -\frac{1}{2}h^4 (k+1) (3k +7 -2\alpha) \| \nabla f (y_{k+1}) \|^2 . 
\end{eqnarray*}
Equivalently
\begin{equation}\label{eq:basic_Lyap_2611}
E_{k+1} - E_k  + h^2 \Big( k(\alpha-3) +2\alpha -5 \Big) ( f (y_{k+1})-  f( x^\star) ) + R( \nabla f (y_{k}), \nabla f (y_{k+1})) \leq 0 ,
\end{equation}
where
\begin{eqnarray*}
R(X,Y)&=& \frac{h^2}{2L}(k+2-\alpha)(k+1) \|Y-X  \|^2  +\frac{1}{2}h^4 (k+1) (3k +7 -2\alpha) \| Y \|^2 \\
&-& h^3(k+1)\Big(\left\langle   Y ,- h (k+2 -\alpha) X +h(2k +4-\alpha) Y \right\rangle \Big).
\end{eqnarray*}
To conclude, we just need to prove that the  quadratic form $R$ is positive definite. A simple procedure consists in computing
$\inf_X R(X,Y)$. For fixed $Y$, the minimum of $R(\cdot,Y)$ is achieved at $\bar{X}$ with $\bar{X}-Y= -h^2 LY$. Therefore
\begin{eqnarray*}
\inf_X R(X,Y) &=& \frac{h^4 L}{2}(k+2-\alpha)(k+1) h^2\|Y\|^2  +\frac{1}{2}h^4 (k+1) (3k +7 -2\alpha) \| Y \|^2\\
 &-& h^3(k+1)\Big(\left\langle   Y ,
  - h (k+2 -\alpha) (1-h^2 L)Y +h(2k +4-\alpha) Y \right\rangle.  
\end{eqnarray*}
After reduction, we get
\begin{eqnarray*}
\inf_X R(X,Y)
&=& \frac{h^4 (k+1)}{2} \|Y\|^2 \Big( k(1-Lh^2) + Lh^2 (\alpha-2) + 3-2\alpha \Big).
\end{eqnarray*}
Returning to \eqref{eq:basic_Lyap_2611} we obtain
\begin{eqnarray*}
&& E_{k+1} - E_k  +h^2 \Big( k(\alpha-3) +2\alpha -5 \Big) ( f (y_{k+1})-  f( x^\star) ) \\
&& + \frac{h^4 (k+1)}{2} \|  \nabla f (y_{k+1})\|^2 \Big( k(1-Lh^2) + Lh^2 (\alpha-2) + 3-2\alpha \Big)\leq 0.
\end{eqnarray*}
So, when $\alpha \geq 3$ and $Lh^2 \in ]0,1[$, we obtain that $\seq{E_k}$ is a non-negative non-increasing sequence, hence convergent. By summing the above inequalities over $k$, we finally obtain
\[
\sum_k  k^2  \|  \nabla f (y_{k+1})\|^2  < +\infty.
\]
Also note that when $\alpha >3$ we obtain 
\[
\sum_k  k( f (y_{k+1})-  f( x^\star) )  < +\infty.
\]
\end{proof}

\begin{remark}
It is easy to see that our result entails
\[
\min_{1 \leq i \leq k} \|\nabla f(y_i)\|^2 = \cO\pa{\frac{1}{k^3}} ,
\]
which recovers the rate found in \cite{SDJS,Shi19}.
\end{remark}

\subsection{The case of \eqref{eq:NAG}}
We have the following fast convergence of gradients to zero for \eqref{eq:NAG}. Surprisingly, this result has never been established before in the literature.
\begin{theorem}\label{thm:fastgradNAG}
Let $f: \cH \to \R$ be a $\cC^1$ convex function whose gradient is $L$-Lipschitz continuous. Let $\seq{x_k}$ be the sequence generated by \eqref{eq:NAG}, where $\alpha \geq 3$ and $sL < 1$. Then the following convergence rate is satisfied:
\[
\sum_k  k^2 \| \nabla f (x_k) \|^2 < +\infty  .
\]
In addition, when $\alpha > 3$,
\[
\sum_k  k( f (x_{k})-  f( x^\star) )  < +\infty .
\]
\end{theorem}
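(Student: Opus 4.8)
The plan is to transfer the gradient and value estimates already established for \eqref{eq:RAG} in Theorem~\ref{pr.decay_E_k} onto the \eqref{eq:NAG} iterates, exploiting the exact correspondence of Theorem~\ref{Nest_Ravine}, rather than redoing a Lyapunov analysis from scratch. Given the \eqref{eq:NAG} sequence $\seq{x_k}$, I would introduce the associated sequence $y_k = x_k + \pa{1-\frac{\alpha}{k}}(x_k - x_{k-1})$, which by Theorem~\ref{Nest_Ravine}(i) obeys \eqref{eq:RAG}, and record the single structural identity that drives everything: $x_k = y_{k-1} - s\nabla f(y_{k-1})$, i.e. $x_k$ is precisely the post-gradient point $w_{k-1}$ of the Ravine iteration. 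Since the hypotheses $\alpha \geq 3$ and $sL<1$ match those of Theorem~\ref{pr.decay_E_k}, that theorem applied to $\seq{y_k}$ supplies $\sum_k k^2\norm{\nabla f(y_k)}^2 < +\infty$ and, when $\alpha>3$, $\sum_k k\bpa{f(y_k)-f(x^\star)} < +\infty$.

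For the gradient sum I would estimate $\norm{\nabla f(x_k)}$ by comparison with $\norm{\nabla f(y_{k-1})}$. Because $x_k - y_{k-1} = -s\nabla f(y_{k-1})$, the triangle inequality followed by the $L$-Lipschitz continuity of $\nabla f$ gives $\norm{\nabla f(x_k)} \leq \norm{\nabla f(y_{k-1})} + L\norm{x_k - y_{k-1}} = (1+sL)\norm{\nabla f(y_{k-1})}$. Squaring, multiplying by $k^2$, and reindexing with $j=k-1$ (using $(j+1)^2 \leq 4j^2$ for $j\geq 1$) bounds $\sum_k k^2\norm{\nabla f(x_k)}^2$ by $(1+sL)^2$ times $\bpa{\norm{\nabla f(y_0)}^2 + 4\sum_j j^2\norm{\nabla f(y_j)}^2}$, which is finite by Theorem~\ref{pr.decay_E_k}.

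For the value sum in the case $\alpha>3$, I would apply the descent lemma at $y_{k-1}$ with the exact step $x_k = y_{k-1} - s\nabla f(y_{k-1})$, obtaining $f(x_k) \leq f(y_{k-1}) - s\bpa{1 - \frac{sL}{2}}\norm{\nabla f(y_{k-1})}^2 \leq f(y_{k-1})$, where positivity of the coefficient $1 - \frac{sL}{2}$ follows from $sL<1$. Subtracting $f(x^\star)$ yields $f(x_k)-f(x^\star) \leq f(y_{k-1})-f(x^\star)$, and reindexing ($(j+1)\leq 2j$ for $j\geq 1$) bounds $\sum_k k\bpa{f(x_k)-f(x^\star)}$ by $\bpa{f(y_0)-f(x^\star)} + 2\sum_j j\bpa{f(y_j)-f(x^\star)}$, again finite by Theorem~\ref{pr.decay_E_k}.

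I expect no genuine obstacle here: once Theorem~\ref{Nest_Ravine} identifies $x_k$ as the post-gradient point of the associated Ravine sequence, both estimates follow from elementary Lipschitz and descent-lemma inequalities together with a harmless index shift, reusing the Lyapunov work already carried out for \eqref{eq:RAG}. The only points requiring care are keeping the shift $k \leftrightarrow k-1$ consistent, checking that the boundary term at $j=0$ is finite, and using the strict inequality $sL<1$ to keep the descent coefficient positive. A fully self-contained alternative would be to rerun the Lyapunov computation of Theorem~\ref{pr.decay_E_k} directly in the \eqref{eq:NAG} variables, but the transfer argument is considerably shorter and is the route I would take.
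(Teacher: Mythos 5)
Your proof is correct, but it is not the route the paper's own proof takes: the paper gives a self-contained Lyapunov analysis directly in the \eqref{eq:NAG} variables, based on the energy
$\cE_k = t_k^2(f(x_k)-\min_{\cH} f)+\frac{1}{2s}\|x_{k-1}-x^\star+t_k(x_k-x_{k-1})\|^2$ with $t_k=\frac{k-1}{\alpha-1}$, combined with a refined descent lemma carrying the extra term $-\frac{s}{2}\|\nabla f(x_k)-\nabla f(y_k)\|^2$; after algebraic manipulation this yields the one-step inequality
$\cE_{k+1}+\frac{s}{2}t_{k+1}(t_{k+1}-1)\|\nabla f(x_k)\|^2 \leq \cE_k + (t_{k+1}^2-t_k^2-t_{k+1})(f(x_k)-\min_{\cH} f)$, from which both summability claims follow by telescoping. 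Your transfer argument --- identify $x_k$ as the post-gradient point $y_{k-1}-s\nabla f(y_{k-1})$ of the associated Ravine sequence, then push the estimates of Theorem~\ref{pr.decay_E_k} across using $\|\nabla f(x_k)\|\leq(1+sL)\|\nabla f(y_{k-1})\|$ and $f(x_k)\leq f(y_{k-1})$ --- is precisely the alternative the authors themselves sketch in the remark immediately following the theorem (``Another way to show Theorem~\ref{thm:fastgradNAG} is to use Theorem~\ref{pr.decay_E_k} and the equivalence result in Theorem~\ref{Nest_Ravine}''), and your index bookkeeping and use of $sL<1$ are sound. What the transfer buys is brevity and reuse of the Ravine Lyapunov computation; what the paper's direct proof buys is independence from Theorem~\ref{pr.decay_E_k} (the two results then corroborate each other through two distinct Lyapunov functions), slightly sharper weighted estimates such as $\sum_k k(k+1-\alpha)\|\nabla f(x_k)\|^2<+\infty$ without the $(1+sL)^2$ and reindexing constants, and a template in the $x$-variables that the paper later reuses when discussing the proximal-gradient extension.
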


\begin{proof}
The analysis relies on the energy function
\begin{equation*}
\cE_k(t)=t_k^2(f(x_k)-\min_{\cH} f)+\frac{1}{2s}\|x_{k-1} -x^\star + t_k(x_k-x_{k-1})\|^2,
\end{equation*}
where $t_k \eqdef \frac{k-1}{\alpha-1}$. We will also need the following refined version of the descent lemma
\begin{equation}\label{eq:impdesclem}
f(y - s \nabla f (y)) \leq f (x) + \left\langle  \nabla f (y), y-x \right\rangle -\frac{s}{2} \|  \nabla f (y) \|^2 -\frac{s}{2} \| \nabla f (x)- \nabla f (y) \|^2 
\end{equation}
valid for any $(x,y) \in \cH^2$ and $s \in ]0,1/L]$. 

Let us write successively \eqref{eq:impdesclem} at $y=y_k$ and $x=x_k$, then at $y=y_k$ and $x=x^\star$. Recalling that $x_{k+1}=y_k-s\nabla f(y_k)$, we obtain the two inequalities
\begin{eqnarray}
f(x_{k+1})&\leq& f(x_k)+\langle \nabla f(y_k),y_k-x_k\rangle -\frac{s}{2}\|\nabla f(y_k)\|^2 -\frac{s}{2} \| \nabla f (x_k)- \nabla f (y_k) \|^2 , \label{eq.1}\\
f(x_{k+1})&\leq& \min_{\cH} f+\langle\nabla f (y_k),y_k-x^\star\rangle -s\|\nabla f(y_k)\|^2 . \label{eq.2}
\end{eqnarray}
Multiplying \eqref{eq.1} by $t_{k+1}-1\geq 0$, then adding \eqref{eq.2}, we derive that
\begin{eqnarray}\label{eq.combinaison_1}
t_{k+1} f(x_{k+1})&\leq& (t_{k+1}-1)f(x_k)+\min_{\cH} f 
+\langle \nabla f(y_k),(t_{k+1}-1)(y_k-x_k)+y_k-x^\star\rangle \nonumber\\
&-&\frac{s}{2}(t_{k+1}+1)\|\nabla f(y_k)\|^2 -\frac{s}{2}(t_{k+1}-1)\| \nabla f (x_k)- \nabla f (y_k) \|^2 . 
\end{eqnarray}
Observing that $t_k$ verifies the identity $\alpha_k \eqdef 1- \dfrac{\alpha}{k} = \dfrac{t_k -1}{t_{k+1}}$, we obtain
\begin{eqnarray*}
(t_{k+1}-1)(y_k-x_k)+y_k&=&t_{k+1}\,y_k-(t_{k+1}-1)x_k\\
&=&x_k+t_{k+1}\,\alpha_k(x_k-x_{k-1})\\
&=&x_{k-1}+(1+t_{k+1}\,\alpha_k)(x_k-x_{k-1})\\
&=&x_{k-1}+t_k(x_k-x_{k-1}).
\end{eqnarray*}
Setting $z_k=x_{k-1}+t_k(x_k-x_{k-1})$, we then deduce from \eqref{eq.combinaison_1} that
\begin{eqnarray}\label{eq.majo_t_k+1_Theta(x_k+1)}
t_{k+1}(f(x_{k+1})-\min_{\cH} f)&\leq& (t_{k+1}-1)(f(x_k)-\min_{\cH} f)+\langle \nabla f (y_k),z_k-x^\star\rangle   \nonumber\\
&& -\frac{s}{2}(t_{k+1}+1)\|\nabla f(y_k)\|^2 -\frac{s}{2}(t_{k+1}-1)\| \nabla f (x_k)- \nabla f (y_k) \|^2 .
\end{eqnarray}
On the other hand, observe that
\begin{eqnarray*}
z_{k+1}-z_k&=&x_k+t_{k+1}(x_{k+1}-x_k)-x_{k-1}-t_k(x_k-x_{k-1})\\
&=&t_{k+1}(x_{k+1}-x_k)-(t_k-1) (x_k-x_{k-1})\\
&=&t_{k+1}(x_{k+1}-x_k-\alpha_k(x_k-x_{k-1})) \\
&=&t_{k+1}(x_{k+1}-y_k)=-s\, t_{k+1} \nabla f(y_k).
\end{eqnarray*}
It ensues that
\begin{equation}\label{eq.variations_norme(z_x-x*)_rescale}
\|z_{k+1}-x^\star\|^2=\|z_{k}-x^\star\|^2-2s t_{k+1}\langle \nabla f(y_k),z_k-x^\star\rangle +s^2t_{k+1}^2\| \nabla f(y_k)\|^2.
\end{equation}
By using this equality in \eqref{eq.majo_t_k+1_Theta(x_k+1)}, we find
\begin{eqnarray}\label{eq.majo_t_k+1_Theta(x_k+1)_b}
&&t_{k+1}(f(x_{k+1})-\min_{\cH} f)\leq (t_{k+1}-1)(f(x_k)-\min_{\cH} f)+\frac{1}{2st_{k+1}}(\|z_{k}-x^\star\|^2-\|z_{k+1}-x^\star\|^2)  \nonumber\\
&&\qquad  +\frac{st_{k+1}}{2} \| \nabla f(y_k)\|^2 -\frac{s}{2}(t_{k+1}+1)\|\nabla f(y_k)\|^2 -\frac{s}{2}(t_{k+1}-1)\| \nabla f (x_k)- \nabla f (y_k) \|^2 .
\end{eqnarray}
which is equivalent to
\begin{multline*}
t_{k+1}^2(f(x_{k+1})-\min_{\cH} f)+\frac{1}{2s}\|z_{k+1}-x^\star\|^2
\leq (t_{k+1}^2-t_{k+1})(f(x_k)-\min_{\cH} f)+\frac{1}{2s}\|z_{k}-x^\star\|^2\\
-\frac{st_{k+1}^2}{2} \Big(  \|\nabla f(y_k)\|^2 +(t_{k+1}-1)\| \nabla f (x_k)- \nabla f (y_k) \|^2  \Big).
\end{multline*}
Using the expression of the sequence $\seq{\cE_k}$, we obtain
\begin{multline*}   
\cE_{k+1}\leq\cE_k+(t_{k+1}^2-t_k^2-t_{k+1})(f(x_k)-\min_{\cH} f)
-\frac{st_{k+1}^2}{2} \Big(  \|\nabla f(y_k)\|^2 +(t_{k+1}-1)\| \nabla f (x_k)- \nabla f (y_k) \|^2  \Big).
\end{multline*}
Elementary algebraic computation gives
\[
\|\nabla f(y_k)\|^2 +(t_{k+1}-1)\| \nabla f (x_k)- \nabla f (y_k) \|^2  \geq \frac{t_{k+1}-1}{t_{k+1}} \|\nabla f(x_k)\|^2 .
\]
We finally obtain
\begin{equation}\label{eq:cEkest}   
\cE_{k+1} +  \frac{s}{2} t_{k+1}(t_{k+1}-1)\|\nabla f(x_k)\|^2 \leq \cE_k+(t_{k+1}^2-t_k^2-t_{k+1})(f(x_k)-\min_{\cH} f) .
\end{equation}
By definition of $t_k$, we have $t_{k+1}^2-t_k^2-t_{k+1} = -\frac{k(\alpha-3)+1}{(\alpha-1)^2} \leq 0$ for $\alpha \geq 3$. Thus, $\seq{\cE_k}$ is a non-increasing non-negative function, and summing \eqref{eq:cEkest}, we obtain
\[
\sum_k  k(k+1-\alpha)\|\nabla f(x_k)\|^2 <+\infty . 
\]
When $\alpha > 3$, summing again we get the second claimed estimate
\[
\sum_k  (k(\alpha-3)+1)(f(x_k)-\min_{\cH} f) <+\infty .
\]
\end{proof}

\begin{remark}
Another way to show Theorem~\ref{thm:fastgradNAG} is to use Theorem~\ref{pr.decay_E_k} and the equivalence result in Theorem~\ref{Nest_Ravine} between \eqref{eq:NAG} and \eqref{eq:RAG}.
\end{remark}

\begin{remark}
Theorem~\ref{thm:fastgradNAG} yields the rate
\[
\min_{1 \leq i \leq k} \|\nabla f(x_i)\|^2 = \cO\pa{\frac{1}{k^3}} ,
\]
which matches the complexity bound in \cite[Item~2.]{Nesterov12}. In \cite[Item~3.]{Nesterov12}, a better complexity bound is obtained by applying \eqref{eq:NAG} with $\alpha=3$ to a Tikhonov regularization of $f$ with an asymptotically vanishing parameter. From those complexity bounds, one can straightforwardly show that this parameter has to scale as $\cO\pa{\pa{\frac{\log k}{k}}^2}$ leading to a rate on the gradients
\[
\|\nabla f(x_k)\|^2 = \cO\pa{\pa{\frac{\log k}{k}}^4} .
\]
This is in agreement with our result in Theorem~\ref{thm:fastgradNAG}. On the other hand, one infers from our result that $\|\nabla f(x_k)\|^2$ must decrease at least as fast as $\cO\pa{\frac{1}{k^3(\log k)^\nu}}$, for $\nu > 1$.
\end{remark}

\section{The Ravine accelerated proximal gradient method}

Let us now extend the Ravine method \eqref{eq:RAG} to the case of additively structured "smooth + non-smooth" convex minimization problems
\begin{equation}\label{basic-min}
\min_{x \in \cH} \left\lbrace  \theta(x) \eqdef f(x) + g(x) \right\rbrace,
\end{equation}
where we make the following assumptions
\begin{equation}\tag{H}\label{eq:assum}
\begin{cases}
f: \cH \rightarrow \R  \text{ is a $\cC^1$ convex function and } \nabla f   \text{ is } \text{$L$-Lipschitz continuous; }     \\
g: \cH \rightarrow \mathbb R \cup \left\lbrace +\infty\right\rbrace  \text{ is proper, convex and lower semicontinuous};  \\
S \eqdef \argmin_{\cH} (\theta) \neq \emptyset. 
\end{cases}
\end{equation}

Note that by the above assumptions, $\theta:  \cH \rightarrow \mathbb R \cup \left\lbrace +\infty\right\rbrace  $ is a proper, convex and lower semicontinuous function. A natural extension of \eqref{eq:NAG} to this setting is
%
\begin{equation}
\label{eq:IPG}
\begin{cases}
y_k&= x_{k} + \left(1- \frac{\alpha}{k} \right)( x_{k}  - x_{k-1}) \\
x_{k+1}&= \prox_{sg}\pa{y_k- s\nabla f (y_k)}
\end{cases}
\end{equation}
which generalizes FISTA \cite{BT}.
Let us recall the convergence properties of this algorithm, which are valid under the assumption $sL \leq 1$:
\begin{itemize}
\item $\alpha = 3$: According to \cite{BT}, one has
\[
\theta(x_k)-  \min_{\cH} \theta = \cO \pa{\frac{1}{k^2}}  \mbox{ as } k \to +\infty.
\]
\item $\alpha> 3$: According to \cite{CD} and \cite{AP}
\[
\theta(x_k)-  \min_{\cH} \theta = o\pa{\frac{1}{k^2}}, \quad \|x_k - x_{k-1}\| =o\left(\frac{1}{k}\right) \mbox{ as } k \to +\infty, \qandq \wlim x_k \in S .
\]
\item $\alpha \leq 3$: According to \cite{AAD} and \cite{ACR-subcrit}
\[
\theta(x_k)-  \min_{\cH} \theta = \cO \left(k^{-\frac{2\alpha}{3}}\right) .
\]
\end{itemize}   

Before presenting our algorithm, let us introduce the operator $T_s: \cH \to \cH$ defined by
\begin{equation}\label{def:T-operator}
T_s(y) = \frac{1}{s} \Big(y - \mbox{prox}_{s g} \left( y- s\nabla f (y)\right)\Big).
\end{equation}
Note that solving \eqref{basic-min} is equivalent to find a zero of $T_s$. In addition,  the operator $T_s$ reduces to the gradient operator $\nabla f$ when $g=0$.
Thus the algorithm \eqref{eq:IPG} can be formulated in an equivalent way in the following form 
\begin{equation*}
\begin{cases}
y_k&= x_{k} + \left(1- \frac{\alpha}{k} \right)( x_{k}  - x_{k-1}) \\
x_{k+1}&= y_k -s T_s(y_k).
\end{cases}
\end{equation*}
As a consequence, all the algebraic developments concerning the Ravine method \eqref{eq:RAG} can be extended to the structured additive setting, by just replacing the gradient operator $\nabla f$ by $T_s$. Indeed, one can easily show that for $sL \leq 1$, the two operators share the following properties: monotonicity, co-coercivity and Lipschitz continuity which play a central role in the Lyapunov analysis.

With the help of this analogy, we are now in position to introduce the Ravine Accelerated Proximal Gradient algorithm (\eqref{eq:RAPG} for short):
\begin{myframedeq}[0.6\linewidth]
\begin{equation}\tag{\RAPG{\alpha}}\label{eq:RAPG}
\begin{cases}
w_{k}   &= y_k -s T_s(y_k)\\
y_{k+1} &= w_{k} + \left(1- \frac{\alpha}{k} \right)( w_{k}  - w_{k-1}) .
\end{cases}
\end{equation}
\end{myframedeq}

Figure~\ref{figRAP} gives some geometric insight into the scheme \eqref{eq:RAPG}. 
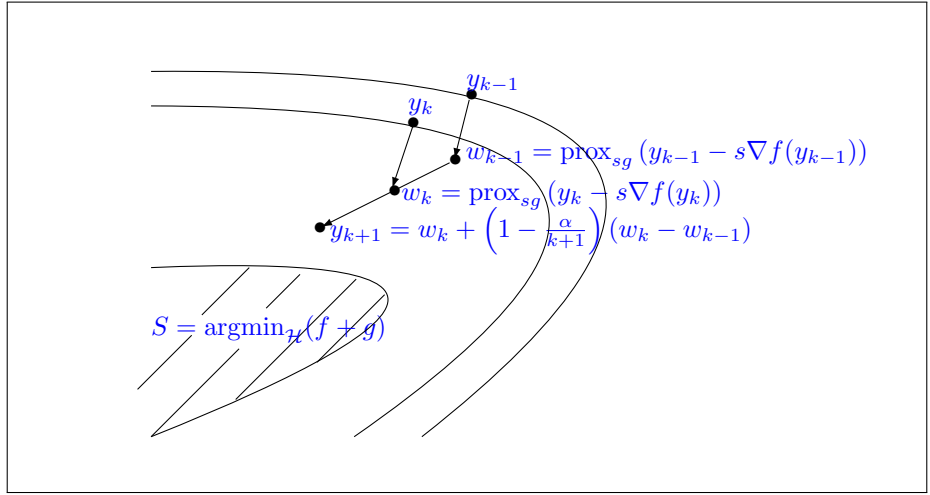
\begin{figure}[hbt!]
 \centering
 \fbox{\begin{minipage}{12cm}

\setlength{\unitlength}{9cm}
\begin{picture}(1,0.7)(-0.3,0.08)

\put(0.364,0.647){$\bullet$}
\put(0.37,0.648){\vector(-1,-4){0.022}}

\put(0.34,0.55){$\bullet$}

\put(0.278,0.605){$\bullet$}
\put(0.287,0.61){\vector(-1,-3){0.031}}

\put(0.25,0.505){$\bullet$}

\put(0.341,0.555){\vector(-2,-1){0.19}}

\put(0.025,0.205){\line(1,1){0.178}}
\put(0.145,0.26){\line(1,1){0.1}}
\put(-0.1,0.15){\line(1,1){0.13}}
\put(0.07,0.34){\line(1,1){0.05}}

\put(-0.12,0.22){\line(1,1){0.08}}
\put(-0.025,0.33){\line(1,1){0.07}}

\put(0.14,0.45){$\bullet$}

\tcb{
\put(0.366,0.669){$y_{k-1}$}
\put(0.28,0.633){$y_{k}$}
\put(0.365,0.56){$w_{k-1} =   \mbox{prox}_{s g} \left( y_{k-1}- s\nabla f (y_{k-1})\right) $}
\put(0.27,0.5){$w_{k} =  \mbox{prox}_{s g} \left( y_{k}- s\nabla f (y_{k})\right) $}
\put(0.164,0.445){$y_{k+1}=  w_k + \pa{1 - \frac{\alpha}{k+1}} \left( w_k - w_{k-1}\right)$}
\put(-0.1,0.3){$S = \argmin_{\cH}(f+g)$}
}

\qbezier(-0.1,0.15)(0.6,0.43)(-0.1,0.4)
\qbezier(0.3,0.15)(1.0,0.7)(-0.1,0.69)
\qbezier(0.2,0.15)(0.9,0.637)(-0.1,0.639)

\end{picture}
\end{minipage}}
\caption{Geometrical illustration of the \eqref{eq:RAPG} algorithm.}
\label{figRAP}
\end{figure}

\medskip 

By following an argument similar to that of Theorem~\ref{highresolution}, the high resolution ODE of the algorithm \eqref{eq:RAPG} gives
\begin{equation}\label{HODE-Newton}
\ddot{Y}(t) + \frac{\alpha}{t}\dot{Y}(t) + \sqrt{s}\frac{d}{dt} \Big( T_s(Y(t)) \Big) + \left(1+\frac{\alpha \sqrt{s} }{2t} \right) T_s(Y(t))  = 0 ,
\end{equation}
where the term $\frac{d}{dt} \Big( T_s(Y(t)) \Big)$ is interpreted as the distributional derivative of the absolutely continuous function $t \mapsto T_s(Y(t))$. The ODE \eqref{HODE-Newton} is a Regularized Inertial Newton dynamic which has been recently studied in \cite{AAV1,AAV2} and \cite{AL1,AL2}. The existence and uniqueness of a strong solution to the Cauchy problem associated with \eqref{HODE-Newton} has been proved in \cite[Theorem~2.1]{AAV1}. It is based on the equivalent reformulation of \eqref{HODE-Newton} as a first-order system. 

Let us establish some fast convergence properties of \eqref{eq:RAPG} which can be deduced from the well established results concerning \eqref{eq:IPG}.
\begin{theorem}\label{thm:RAPG}
Suppose that \eqref{eq:assum} holds, $\alpha \geq 3$, and $sL \in ]0,1]$. Let $\seq{y_k}$ and $\seq{w_k}$ be the sequences generated by \eqref{eq:RAPG}. Then the following properties are satisfied:
\begin{equation*}
\theta(w_k)-\min_{\cH} \theta = \cO\left(\frac{1}{k^2}\right)  \,  \text{ \textit{as} } k\to +\infty;\end{equation*}
\noindent When  $\alpha>3$, 
\begin{equation*}
\theta(w_k)-\min_{\cH} \theta =o\left(\frac{1}{k^2}\right) \qandq \wlim y_k = \wlim w_k \in S .
\end{equation*}
\end{theorem}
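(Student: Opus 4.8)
The plan is to leverage the structural equivalence between \eqref{eq:RAPG} and \eqref{eq:IPG}, exactly mirroring the argument that Theorem~\ref{Nest_Ravine} and Theorem~\ref{Rav-conv} used to transfer convergence of \eqref{eq:NAG} to \eqref{eq:RAG}. The crucial observation is that the operator $T_s$ plays the role that $\nabla f$ played in the smooth case, and the paper has already told us that, for $sL \leq 1$, $T_s$ inherits monotonicity, co-coercivity, and Lipschitz continuity from $\nabla f$. So first I would establish the algebraic dictionary: if $\seq{y_k}$ follows \eqref{eq:RAPG}, then the sequence defined by $x_{k+1} \eqdef y_k - s T_s(y_k) = w_k$ follows \eqref{eq:IPG}. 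This is the verbatim analogue of Theorem~\ref{Nest_Ravine}(ii), obtained by substituting $T_s$ for $\nabla f$ throughout the short computation, using the identity $w_k = \prox_{sg}(y_k - s\nabla f(y_k))$.

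Once this correspondence is in hand, the convergence rates follow from the known results for \eqref{eq:IPG} recalled just before the theorem. Since $x_{k+1} = w_k$, the value estimates $\theta(x_k) - \min_\cH \theta = \cO(1/k^2)$ (for $\alpha = 3$, from \cite{BT}) and $o(1/k^2)$ (for $\alpha > 3$, from \cite{CD,AP}) translate directly into the stated rates for $\theta(w_k) - \min_\cH \theta$, up to the harmless index shift. Thus the value-convergence parts of the theorem require essentially no new work beyond invoking the dictionary.

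For the weak convergence of the iterates when $\alpha > 3$, I would reproduce the reasoning of Theorem~\ref{Rav-conv}(iii). From the \eqref{eq:IPG} theory we have $\wlim x_k = x^\star \in S$ and $\|x_k - x_{k-1}\| = o(1/k) \to 0$. The relation $y_k = x_k + \pa{1 - \frac{\alpha}{k}}(x_k - x_{k-1})$ (the extrapolation step of \eqref{eq:IPG}, which the dictionary makes available) gives $\|y_k - x_k\| \leq \|x_k - x_{k-1}\| \to 0$, so $y_k - x_k \to 0$ strongly. Combining strong convergence of the difference with weak convergence of $\seq{x_k}$ yields $\wlim y_k = x^\star$; and since $w_k = x_{k+1}$, we also get $\wlim w_k = x^\star$, establishing $\wlim y_k = \wlim w_k \in S$.

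The only genuine obstacle is the claim, asserted but not proved in the excerpt, that $T_s$ shares the three analytic properties of $\nabla f$ when $sL \leq 1$. Co-coercivity is the delicate one: it underpins both the descent/Lyapunov estimates behind the \eqref{eq:IPG} rates and the Opial-type argument for weak convergence. I would either cite the standard fact that the forward-backward residual operator $T_s = \frac{1}{s}(I - \prox_{sg}\circ(I - s\nabla f))$ is $\frac{s}{2}$-co-coercive (equivalently, $\frac{2}{s}$-firmly-nonexpansive structure of $\prox_{sg}\circ(I - s\nabla f)$ for $s \leq 1/L$), or sketch it via the firm nonexpansiveness of $\prox_{sg}$ together with the co-coercivity of $\nabla f$. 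Everything else is a routine transcription of the already-proved Ravine/Nesterov equivalence with $\nabla f$ replaced by $T_s$.
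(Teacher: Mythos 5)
Your proposal is correct, and it follows the same overall transfer strategy as the paper --- both arguments rest on the algebraic dictionary $w_k = x_{k+1}$ between the \eqref{eq:RAPG} and \eqref{eq:IPG} iterates, and on the known FISTA-type results for \eqref{eq:IPG} --- but your handling of the value rates is more direct than the paper's. The paper does not exploit the identity $\theta(w_k)=\theta(x_{k+1})$ head-on; instead it invokes the composite descent lemma $\theta(y-sT_s(y)) \leq \theta(x) + \left\langle T_s(y), y-x \right\rangle - \frac{s}{2}\|T_s(y)\|^2$ with $x=w_{k-1}$, $y=y_k$, then Cauchy--Schwarz, then the estimates $\|T_s(y_k)\|=\cO(1/k)$ and $\|y_k-w_{k-1}\|=\cO(1/k)$ (both reduced to $\|x_k-x_{k-1}\|=\cO(1/k)$, cited from \cite{ACPR,AP}), arriving at $\theta(w_k)-\min_{\cH}\theta \leq \theta(x_k)-\min_{\cH}\theta + C/k^2$. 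That detour mirrors the proof of Theorem~\ref{Rav-conv}, where such an estimate is genuinely needed because the rate there concerns $f(y_k)$ at the extrapolated point; here, since the statement is about $w_k$ (as it must be: $\theta(y_k)$ can equal $+\infty$ when the extrapolated point $y_k$ leaves the domain of $g$), your one-line identification $\theta(w_k)=\theta(x_{k+1})$ already yields both the $\cO(1/k^2)$ and $o(1/k^2)$ claims, and the iterate-difference rates are needed only for the weak-limit argument, where your reasoning and the paper's coincide verbatim. Two small remarks. First, your concern about co-coercivity of $T_s$ is moot for your own argument: you invoke the \eqref{eq:IPG} results as known facts (exactly as the paper does), so you never need the analytic properties of $T_s$, only the purely algebraic dictionary, which is indeed Theorem~\ref{Nest_Ravine}(ii) with $\nabla f$ replaced by $T_s$. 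Second, for that transcription to produce \eqref{eq:IPG} exactly, the extrapolation coefficient in \eqref{eq:RAPG} must be read as $1-\frac{\alpha}{k+1}$ (as in \eqref{eq:RAG} and in Figure~\ref{figRAP}); the coefficient $1-\frac{\alpha}{k}$ displayed in \eqref{eq:RAPG} is an index slip in the paper, and the paper's own proof also relies on the corrected identification $w_k=x_{k+1}$.
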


\begin{proof}
As a key ingredient, we use the following analog of the gradient descent lemma for composite optimization, see \cite{BT}, \cite{CD}: for any $(x,y) \in \cH^2$
\begin{equation}
\theta (y- s T_s(y)) \leq \theta (x) + \left\langle T_s(y), y-x  \right\rangle -\frac{s}{2} \|T_s(y)\|^2.
\end{equation} 
Taking $x=w_{k-1}$ and $y= y_k$, we deduce that
\begin{equation}
\theta (w_k) \leq \theta (w_{k-1}) + \left\langle T_s(y_k), y_k-w_{k-1}  \right\rangle.
\end{equation} 
By Cauchy-Schwarz inequality, we get
\begin{equation}\label{est10-11-2021}
\theta (w_k) \leq \theta (w_{k-1}) + \| T_s(y_k)\| \| y_k-w_{k-1} \|.
\end{equation} 
By definition of \eqref{eq:RAPG}, and by using the link with \eqref{eq:IPG} (recall that $w_k=x_{k+1}$ where $\seq{x_k}$ are the iterates generated by \eqref{eq:IPG}), we have
\begin{eqnarray*}
\|T_s(y_k)\|&=&\frac{1}{s}\|w_k-y_k\|\\
&\leq& \frac{1}{s} \pa{\| w_k - w_{k-1}\|+\| y_k - w_{k-1}\|} \\
&\leq& \frac{1}{s} \pa{\| w_k - w_{k-1}\|+\| w_{k-1} - w_{k-2}\|} \\
&=& \frac{1}{s} \pa{\|x_{k+1}-x_k\| + \| x_{k} - x_{k-1}\|}=\cO \Big(\frac{1}{k}\Big) ,
\end{eqnarray*}
where the latter rate is known from \cite{ACPR,AP}. In a similar way,
\begin{equation}\label{eq:yktoxk}
\|y_k-w_{k-1}\| \leq  \| w_{k-1} - w_{k-2}\| = \|x_{k}-x_{k-1}\| = \cO \Big(\frac{1}{k}\Big).
\end{equation}
By plugging the above estimates into \eqref{est10-11-2021}, we obtain
\begin{equation*}
\theta (w_k) - \min_{\cH} \theta \leq \theta (x_k) - \min_{\cH} \theta + \frac{C}{k^2}= \cO \Big(\frac{1}{k^2}\Big).
\end{equation*} 
where the rate $\cO(1/k^2)$ on $\theta (x_k) - \min_{\cH}$ is known from \cite{BT}. 

\medskip

For $\alpha > 3$, we know from \cite{ACPR,AP} that $\theta (x_k) - \min_{\cH} = o\pa{\frac{1}{k^2}}$ and $\norm{x_k-x_{k-1}} = o\pa{\frac{1}{k}}$. We thus argue as above to obtain the claimed $o\pa{\frac{1}{k^2}}$ rate. In addition, we have from \eqref{eq:yktoxk} that $\| y_k -x_k \| \to 0$, \ie $y_k - x_k$  converges strongly to zero. Since the sequence $\seq{x_k}$ converges weakly when $\alpha >3$, see \cite{ACPR,AP}, it follows that the sequence $\seq{y_k}$ converges weakly to the same limit as $\seq{x_k}$.
\end{proof}

\section{The strongly convex case}
%
In this section, we briefly discuss the strongly convex case. Recall that $f: \cH \to \mathbb R$ is said to be $\mu$-strongly convex for some $\mu >0$ if   $f- \frac{\mu}{2}\| \cdot\|^2$ is convex. In this case, a proper tuning of the viscous damping coefficient in the dynamic \eqref{ODE001} of Polyak provides exponential convergence rate with optimal rate.
\begin{theorem}\label{strong-conv-thm}
Suppose that $f: \cH \to \R$ is a $\cC^1$ and $\mu$-strongly convex function for some $\mu >0$.
Let  $x(\cdot): [t_0, + \infty[ \to \cH$ be a solution trajectory of \eqref{ODE001} with $\gamma=2\sqrt{\mu}$, \ie
\begin{equation}\label{dyn-sc-a}
\ddot{x}(t) + 2\sqrt{\mu} \dot{x}(t)  + \nabla f (x(t)) = 0,
\end{equation}
with initial condition $(x(t_0),\dot{x}(t_0))$, $t_0 \geq 0$. Then, for all $t \geq t_0$ 
\[
f(x(t))-  \min_{\cH}f  \leq  C e^{-\sqrt{\mu}(t-t_0)} 
\] 
where $C \eqdef f(x(t_0))-  \min_{\cH}f  + \mu \norm{x(t_0)-x^\star}^2 + \| \dot{x}(t_0)\|^2 .$
\end{theorem}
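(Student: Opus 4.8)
The plan is to construct a Lyapunov function adapted to the strongly convex dynamics \eqref{dyn-sc-a} and establish its exponential decay. Since the damping coefficient $\gamma = 2\sqrt{\mu}$ is tuned to the strong convexity modulus, I expect the natural decay rate to be $e^{-\sqrt{\mu}(t-t_0)}$, which matches the target. First I would introduce the energy function
\[
\mathcal{E}(t) \eqdef e^{\sqrt{\mu}(t-t_0)}\left( f(x(t)) - \min_{\cH} f + \frac{1}{2}\norm{\sqrt{\mu}(x(t)-x^\star) + \dot{x}(t)}^2 + \frac{\mu}{2}\norm{x(t)-x^\star}^2 \right),
\]
or a close variant thereof. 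The idea is that the term inside the parentheses is a standard Lyapunov function for \eqref{dyn-sc-a} that decays at rate $\sqrt{\mu}$, so that $\mathcal{E}$ is non-increasing; evaluating at $t_0$ then yields the constant $C$.

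The key steps I would carry out in order are as follows. First, differentiate the bracketed energy $V(t)$ along the trajectory, using $\ddot{x} = -2\sqrt{\mu}\dot{x} - \nabla f(x)$ to eliminate the second derivative. Second, invoke $\mu$-strong convexity in the quantitative form $\dotp{\nabla f(x)}{x-x^\star} \geq f(x) - \min_{\cH} f + \frac{\mu}{2}\norm{x-x^\star}^2$ together with $f(x) - \min_{\cH} f \geq \frac{\mu}{2}\norm{x-x^\star}^2$, and combine these with $\dotp{\nabla f(x)}{\dot{x}} = \frac{d}{dt}(f(x)-\min_{\cH} f)$. The aim is to verify the differential inequality $\dot{V}(t) \leq -\sqrt{\mu}\, V(t)$, which by Gr\"onwall gives $V(t) \leq e^{-\sqrt{\mu}(t-t_0)} V(t_0)$ and hence, using $f(x(t)) - \min_{\cH} f \leq V(t)$, the stated bound. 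Finally, I would bound $V(t_0)$ above by $C$ using the triangle/Young inequality $\frac{1}{2}\norm{\sqrt{\mu}(x(t_0)-x^\star)+\dot{x}(t_0)}^2 \leq \mu\norm{x(t_0)-x^\star}^2 + \norm{\dot{x}(t_0)}^2$, absorbing the cross term so that the coefficients line up exactly with the definition of $C$.

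The main obstacle I anticipate is the algebraic bookkeeping in the cross terms when establishing $\dot{V} \leq -\sqrt{\mu}\,V$: the mixed terms arising from differentiating $\frac{1}{2}\norm{\sqrt{\mu}(x-x^\star)+\dot{x}}^2$ and $\frac{\mu}{2}\norm{x-x^\star}^2$ must conspire to cancel the residual $\norm{\dot{x}}^2$ and $\dotp{\nabla f(x)}{\dot{x}}$ contributions, leaving only terms controllable by strong convexity. The precise weighting of the $\frac{\mu}{2}\norm{x-x^\star}^2$ correction term inside $V$ is the delicate part, and I would tune it (possibly adjusting its coefficient) so that the strong convexity inequalities close the estimate at exactly rate $\sqrt{\mu}$. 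Once the correct Lyapunov function is identified, the remaining computation is routine, and the final evaluation at $t_0$ to recover $C$ is immediate from the Young inequality above.
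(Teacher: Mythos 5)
Your overall strategy coincides with the paper's: a Lyapunov function built around $f(x(t))-\min_{\cH}f+\frac{1}{2}\norm{\sqrt{\mu}(x(t)-x^\star)+\dot{x}(t)}^2$, a pointwise differential inequality at rate $\sqrt{\mu}$, Gr\"onwall, and a Young inequality at $t_0$ to recover $C$. However, the candidate $V$ you lead with, which adds the correction term $\frac{\mu}{2}\norm{x-x^\star}^2$, does \emph{not} satisfy $\dot{V}\leq-\sqrt{\mu}\,V$, and the ``tuning'' you deferred resolves in only one way: the coefficient of that correction term must be exactly zero. To see this concretely, differentiate along the trajectory and use \eqref{dyn-sc-a}; writing $v=\sqrt{\mu}(x-x^\star)+\dot{x}$ and $V_c=f(x)-\min_{\cH}f+\frac{1}{2}\norm{v}^2+\frac{c\mu}{2}\norm{x-x^\star}^2$, one finds
\[
\dot{V_c}=-\sqrt{\mu}\,\dotp{\nabla f(x)}{x-x^\star}-(1-c)\,\mu\dotp{x-x^\star}{\dot{x}}-\sqrt{\mu}\norm{\dot{x}}^2 .
\]
Now test the state $\dot{x}=0$, $x\neq x^\star$, with $f=\frac{\mu}{2}\norm{\cdot-x^\star}^2$ (which is $\mu$-strongly convex, with all your strong-convexity inequalities holding with \emph{equality}, so there is no slack to exploit). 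Then $\dot{V_c}=-\sqrt{\mu}\,\mu\norm{x-x^\star}^2$ while $-\sqrt{\mu}\,V_c=-\sqrt{\mu}\,\mu\norm{x-x^\star}^2\bpa{1+\frac{c}{2}}$, so $\dot{V_c}\leq-\sqrt{\mu}\,V_c$ fails for every $c>0$ and holds with equality at $c=0$. In other words, the rate $\sqrt{\mu}$ is already saturated by the uncorrected energy, so no positive multiple of $\norm{x-x^\star}^2$ can be added; this is a genuine obstruction, not bookkeeping. (The same state also breaks your evaluation $V(t_0)\leq C$ when $c>0$.)

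Everything closes once the correction term is dropped, which is precisely the paper's proof. With $\mathcal{E}(t)\eqdef f(x(t))-\min_{\cH}f+\frac{1}{2}\norm{\sqrt{\mu}(x(t)-x^\star)+\dot{x}(t)}^2$, the computation above (at $c=0$) gives $\dot{\mathcal{E}}=-\sqrt{\mu}\dotp{\nabla f(x)}{x-x^\star}-\mu\dotp{x-x^\star}{\dot{x}}-\sqrt{\mu}\norm{\dot{x}}^2$, and the strong-convexity inequality $\dotp{\nabla f(x)}{x-x^\star}\geq f(x)-\min_{\cH}f+\frac{\mu}{2}\norm{x-x^\star}^2$ yields
\[
\dot{\mathcal{E}}(t)+\sqrt{\mu}\Big(\mathcal{E}(t)+\tfrac{1}{2}\norm{\dot{x}(t)}^2\Big)\leq 0 ,
\]
hence $\dot{\mathcal{E}}\leq-\sqrt{\mu}\,\mathcal{E}$; note that the leftover $\frac{1}{2}\norm{\dot{x}}^2$ appears here with the favorable sign precisely because no correction term was added. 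Gr\"onwall then gives $\mathcal{E}(t)\leq e^{-\sqrt{\mu}(t-t_0)}\mathcal{E}(t_0)$, and the inequality $\frac{1}{2}\norm{a+b}^2\leq\norm{a}^2+\norm{b}^2$ at $t=t_0$ bounds $\mathcal{E}(t_0)$ by the stated constant $C$, completing the argument. So your plan is correct in outline and is the paper's argument, but the step you yourself flagged as delicate is where your specific choice would fail.
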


\if
{

\begin{proof}
 Let $x^\star$ be the unique minimizer of $f$.
Define $\mathcal E : [t_0, +\infty[ \to \mathbb R^+ $  by
$$
\mathcal E (t):= f(x(t))-  \min_{\cH}f  + \frac{1}{2} \| \sqrt{\mu} (x(t) -x^\star) + \dot{x}(t)\|^2.
$$
Derivation of $\mathcal E (\cdot) $ gives 
$$
\frac{d}{dt}\mathcal E (t):=\langle  \nabla f (x(t)),  \dot{x}(t)       \rangle  + \langle \sqrt{\mu} (x(t) -x^\star) + \dot{x}(t), \sqrt{\mu}  \dot{x}(t)  +  \ddot{x}(t)        \rangle .
$$
Using the equation \eqref{dyn-sc-a}, we get
$$
\frac{d}{dt}\mathcal E (t)=\langle  \nabla f (x(t)),  \dot{x}(t)       \rangle  + \langle \sqrt{\mu} (x(t) -x^\star) + \dot{x}(t), -\sqrt{\mu}  \dot{x}(t)  - \nabla f (x(t))       \rangle .
$$
After developing and simplification, we obtain
$$
\frac{d}{dt}\mathcal E (t) + \sqrt{\mu}\langle  \nabla f (x(t)),  x(t) -x^\star       \rangle  + \mu\langle x(t) -x^\star , \dot{x}(t)\rangle + \sqrt{\mu} \| \dot{x}(t) \|^2   = 0.
$$
By the strong convexity of $f$ we have
$$
\langle  \nabla f (x(t)),  x(t) -x^\star       \rangle  \geq f(x(t))- f(x^\star) + \frac{\mu}{2} \| x(t) -x^\star \|^2 .
$$
By combining the two relations above we obtain 
$$
\frac{d}{dt}\mathcal E (t) + \sqrt{\mu}\left( f(x(t))- f(x^\star) + \frac{\mu}{2} \| x(t) -x^\star \|^2  + \sqrt{\mu} \langle x(t) -x^\star , \dot{x}(t)\rangle +  \| \dot{x}(t) \|^2   \right) \leq 0.
$$
Equivalently
$$
\frac{d}{dt}\mathcal E (t) + \sqrt{\mu}\left(\mathcal E (t)      
+ \frac{1}{2} \|  \dot{x} \|^2        \right) \leq 0.
$$
Integrating this differential inequality gives the claim. \qed
\end{proof}
}
\fi

According to the procedure described in section~\ref{continuous-discrete}, we consider three different discretizations of \eqref{dyn-sc-a} inspired by the inertial proximal algorithm, then the Nesterov method, and finally the Ravine method. Let $h>0$ be the temporal step size.

\subsection*{Inertial proximal algorithm} Implicit time discretization of \eqref{dyn-sc-a} gives
\[
\frac{ x_{k+1} - 2x_{k}+ x_{k-1} }{h^2} +   \sqrt{\mu} \frac{x_{k+1} - x_{k-1}}{h} + \nabla f( x_{k+1}) = 0.
\]
After multiplication by $s=h^2$, we obtain
\begin{equation}
(1+ h\sqrt{\mu}) (x_{k+1}-x_k)  + s \nabla f( x_{k+1}=   (1- h\sqrt{\mu})(x_{k} -x_{k-1}) , \label{basic-eq-2c}
\end{equation}
which gives
\begin{equation}\label{basic-eq-3c}
x_{k+1} = \prox_{\frac{s}{1+\sqrt{\mu s}}  f} \left(x_{k} +  \frac{ 1- \sqrt{\mu s}}{1+ \sqrt{\mu s} } (x_{k} -x_{k-1}) \right). 
\end{equation}
So, we obtain the inertial proximal algorithm 
\begin{equation}\label{eq:ipgsc}
\begin{cases}
y_k&=  x_{k} + \frac{ 1- \sqrt{\mu s}}{1+ \sqrt{\mu s} } (x_{k} -x_{k-1})  \\
x_{k+1} &= \prox_{\frac{s}{1+\sqrt{\mu s}}  f} \left( y_{k} \right)
\end{cases}
\end{equation}
\subsection*{Nesterov method} Replacing the proximal step by a gradient step in \eqref{eq:ipgsc}, we obtain 
\begin{eqnarray*}
\begin{cases}
y_k &=  x_{k} + \frac{ 1- \sqrt{\mu s}}{1+ \sqrt{\mu s} } (x_{k} -x_{k-1})  \\
x_{k+1} &=   y_{k} -\frac{s}{1+\sqrt{\mu s}}  \nabla f(y_k).
\end{cases}
\end{eqnarray*}
\subsection*{Ravine method} Interverting the role of the variables $x_k$ and $y_k$ we obtain the Ravine method
\begin{eqnarray*}
\begin{cases}
w_{k} &=   y_{k} -\displaystyle{\frac{s}{1+\sqrt{\mu s}}}  \nabla f(y_k)\\
y_{k+1} &=  w_{k} + \displaystyle{\frac{ 1- \sqrt{\mu s}}{1+ \sqrt{\mu s} } }(w_{k} -w_{k-1}) .
\end{cases}
\end{eqnarray*}
This scheme is closely related to the classical form of the Nesterov accelerated gradient method for strongly convex minimization. Note however that our approach makes appear a gradient step size  $\frac{s}{1+\sqrt{\mu s}}$ which is slightly different from the most usually used step size $s$. A detailed Lyapunov analysis for this scheme is an interesting subject that we leave for a future work.

\section{Comparison to related algorithms}

The following table gives a bird's eye view of the relationships between \eqref{eq:NAG}, \eqref{eq:RAG}.

\medskip

\begin{footnotesize}
\begin{center}
\begin{tabular}{|c||c||c|c|}
     \hline
& \multicolumn{3}{c|}{}  \\ 
 & \multicolumn{3}{c|}{Comparison of \eqref{eq:NAG} with \eqref{eq:RAG}}  \\ 
 & \multicolumn{3}{c|}{}  \\ 

\hline
\hline

 &&\multicolumn{2}{c|}{}\\ 
Algorithm & \eqref{eq:NAG} & \multicolumn{2}{c|}{\eqref{eq:RAG}} \\
\hline
&&\multicolumn{2}{c|}{}\\
Dual structure &  Extrapolation, then Gradient step& 
\multicolumn{2}{c|}{Gradient step, then Extrapolation}\\
&&\multicolumn{2}{c|}{}\\
\hline
&&\multicolumn{2}{c|}{}\\
Low resolution ODE &   \eqref{eq:AVD}  & \multicolumn{2}{c|}{\eqref{eq:AVD}}\\
&&\multicolumn{2}{c|}{}\\
\hline
&&\multicolumn{2}{c|}{}\\
High resolution ODE & Hessian driven damping (variable $x$)  & \multicolumn{2}{c|}{Hessian driven damping (variable $y$)}\\
&&\multicolumn{2}{c|}{}\\
\hline
&&\multicolumn{2}{c|}{}\\
Fast convergence of the gradients & Yes  & \multicolumn{2}{c|}{Yes}\\
&&\multicolumn{2}{c|}{}\\
\hline
&&\multicolumn{2}{c|}{}\\
Convergence rate, $\alpha >3$ &  $f (x_k)-\min_{\cH} f =   o \left(\displaystyle{\frac{1}{k^2} }\right)$  & 
\multicolumn{2}{c|}{$f (y_k)-\min_{\cH} f =   o \left(\displaystyle{\frac{1}{k^2} }\right)$ }\\
&&\multicolumn{2}{c|}{}\\
\hline
&&\multicolumn{2}{c|}{}\\
Convergence of iterates,  $\alpha >3$ & Yes & 
\multicolumn{2}{c|}{Yes}\\
&&\multicolumn{2}{c|}{}\\
\hline
&&\multicolumn{2}{c|}{}\\
Proximal version,  $\alpha >3$ & $f (x_k)-\min_{\cH} f =   o \left(\displaystyle{\frac{1}{k^2} }\right)$ & 
\multicolumn{2}{c|}{$f(\prox_{sf}(y_{k}))-\min_{\cH} f=o\left(\frac{1}{k^2}\right)$ }\\
&&\multicolumn{2}{c|}{}\\
 \hline
\end{tabular}
\end{center}
\end{footnotesize}

\subsection*{Comparison with \eqref{eq:IGAHD}}
Recall that the algorithm \eqref{eq:IGAHD}, introduced by the authors in \cite{ACFR}, is based on the dynamic  \eqref{ODE10}, a special case of \eqref{eq:DINAVD}, with damping parameters $\alpha \geq 3$ and $\beta \geq 0$.
This dynamic is essentially the same as the high resolution ODE \eqref{ODE101} associated to \eqref{eq:RAG} (the same holds for the ODE resp. \eqref{highresODENAG} of \eqref{eq:RAG}).
The two dynamics can be deduced from each other by a linear temporal reparameterization, which preserves their convergence properties. However, an important message to keep in mind here is that the algorithms \eqref{eq:RAG} and \eqref{eq:IGAHD} markedly differ in the type of temporal discretization used to obtain them. The algorithm \eqref{eq:RAG} is obtained by explicit discretization of \eqref{ODE101}, namely
\begin{equation}\label{dyn-Rav-discrete}
\frac{ y_{k+1} - 2y_{k}+ y_{k-1} }{h^2}  + \frac{\alpha}{(k+1-\alpha)h} \frac{y_{k+1} -y_k}{h} +\left( 1+ \frac{\alpha}{k+1-\alpha}\right) \nabla f (y_k)
+ \nabla f (y_k) -\nabla f (y_{k-1} ) =0.
\end{equation}
On the other hand \eqref{eq:IGAHD} is obtained by the time discretization of \eqref{ODE10}
\begin{eqnarray*}
\frac{1}{s}(x_{k+1} - 2x_{k}+ x_{k-1} ) +    \frac{\alpha}{ks}(x_{k} - x_{k-1}) +  \frac{\beta}{\sqrt{s}}(\nabla f( x_{k}) - \nabla f( x_{k-1})   ) + \frac{\beta}{k\sqrt{s}} \nabla f( x_{k-1})  +  
\nabla f( y_{k}) =0,
\end{eqnarray*}
with $y_k$ is an extrapolated point inspired by Nesterov's scheme. The convergence properties of \eqref{eq:IGAHD}, recalled in Theorem~\ref{thm:IGAHD} are similar to those \eqref{eq:RAG} and \eqref{eq:NAG} (see Theorem~\ref{thm:NAG}, Theorem~\ref{Rav-conv}, Theorem~\ref{pr.decay_E_k} and Theorem~\ref{thm:fastgradNAG}). In a nutshell, from a theoretical point of view, these methods behave similarly. Nevertheless, it was shown in \cite{ACFR} that, numerically, \eqref{eq:IGAHD} enjoys much less oscillations that \eqref{eq:NAG} (and thus \eqref{eq:RAG}). We conjecture that this is a consequence of the more subtle discretization underlying \eqref{eq:IGAHD}. So far, this lacks clear theoretical justification and we believe that it is a nice research program to undertake in the future.
%
%







\section{Conclusion, Perspective}

This work was intended to unveil the relationship between the Nesterov accelerated method and the Ravine method, which as been ignored for a long time and sometimes confused with the Nesterov. We have shed light on these connections through the perspective of dynamical systems. We believe that this work paves the way to many important questions that remain to be answered. Among them, we mention the following ones:
\begin{itemize}
\item Design better structure-preserving discretization schemes/algorithms for inertial systems, and understand their fundamental limits/performance.

\item For additively structured "smooth + nonsmooth" convex minimization problems, develop a Lyapunov analysis showing the fast convergence to zero of the operators (which correspond to the gradients for the Ravine accelerated gradient method).
 
\item Develop a Ravine accelerated method for linearly constrained optimization problems, which is based on the augmented Lagrangian approach, and the ADMM algorithm.

\item Study the introduction of perturbation, errors into the Ravine method, so as to prepare the stochastic versions of this algorithm.

\item Generalization and tuning of the extrapolation parameter.

\item The case of monotone inclusions.
\end{itemize}

\section*{Acknwoledgements}
We would like to thank N. Boumal who brought the reference \cite{Nesterov12} to our attention. 


\end{document}